\newtheorem{lemma}{Lemma}
\newtheorem{theorem}{Theorem}
\newtheorem*{theorem*}{Theorem}
\DeclareMathOperator\mat{M}
\newcommand{\op}{\mathrm{op}}
\DeclareMathOperator\elin{E}
\DeclareMathOperator\st{St}
\newcommand\stfree{\mathrm{St}^{\mathrm F}}
\newcommand\stabs{\overline{\mathrm{St}}}
\DeclareMathOperator\klin{K}
\DeclareMathOperator\stmap{st}
\DeclareMathOperator\glin{GL}
\DeclareMathOperator\diag{D}
\DeclareMathOperator\Coder{Coder}
\newcommand{\leqt}{\trianglelefteq}
\DeclareMathOperator\Ad{Ad}
\DeclareMathOperator\Aut{Aut}
\newcommand\eps{\varepsilon}
\newcommand\trp{\(^{\mathrm t}\)}
\newcommand{\up}[2]{{^{#1}\!{#2}}}
\newcommand{\fref}[1]{(\ref{#1})}
\title{Explicit presentation of relative Steinberg groups}
\author{
  Egor Voronetsky
  \thanks{Research is supported by the Russian Science Foundation grant 19-71-30002.} \\
  Chebyshev Laboratory, \\
  St. Petersburg State University, \\
  14th Line V.O., 29B, \\
  Saint Petersburg 199178 Russia \\
}
\begin{document}
\maketitle

\begin{abstract}
We find an explicit presentation of relative linear Steinberg groups \(\st(n, R, I)\) for any ring \(R\) and \(n \geq 4\) by generators and relations as abstract groups. We also prove a similar result for relative simply laced Steinberg groups \(\st(\Phi; R, I)\) for commutative \(R\) and \(\Phi \in \{\mathsf A_\ell, \mathsf D_\ell, \mathsf E_\ell \mid \ell \geq 3\}\).
\end{abstract}

\section{Introduction}

Relative linear Steinberg groups \(\st(n, R, I)\) are defined by F. Keune and J.-L. Loday in \cite{Keune, Loday} as certain crossed modules over the absolute Steinberg group \(\st(n, R)\), where \(R\) is an associative ring with \(1\) and \(I \leqt R\) is an ideal. They actually considered only the stable case \(n = \infty\), but the definition easily generalizes to the unstable situation. The relative Steinberg groups have explicit presentation by generators and relations, but as groups with an action of \(\st(n, R)\). There is another definition of \(\st(n, R, I)\) by M. Tulenbaev \cite{Tulenbaev} if \(R\) is commutative, but in terms of generators \(X_{vw}\) parametrized by vectors \(v, w\) satisfying some conditions instead of the elementary generators \(z_{ij}(a, p)\). By \cite{CentralityE}, both definitions give the same groups for commutative rings.

For relative simply laced Steinberg groups \(\st(\Phi; R, I)\) we use a definition from \cite{CentralityE}. It is well-known that relative Steinberg groups (both linear and simply laced) are generated by the elementary conjugates \(z_{ij}(a, p) = \up{x_{ji}(p)}{x_{ij}(a)}\) as abstract groups (see, for example, \cite[lemma 5]{CentralityE} for the case of simply-laced groups). By \cite[theorem 9]{CentralityE}, all relations in \(\st(\Phi; R, I)\) come from relative Steinberg groups associated with root subsystems of \(\Phi\) of type \(\mathsf A_3\), i.e. they involve a bounded number of roots. We give an explicit list of such relations (theorems \ref{rel-lin} and \ref{rel-ade}) and show, in particular, that they involve only roots of subsystems of types \(\mathsf A_1\), \(\mathsf A_1 \times \mathsf A_1\), and \(\mathsf A_2\). Our list consists of a finite number of identities parametrized by roots.

To prove our result in the linear case, it is useful to consider a slightly more general groups. It is straightforward to construct absolute and relative Steinberg groups associated with any associative ring \(R\) with a complete family of full orthogonal idempotents instead of a matrix algebra \(\mat(n, S)\). Below we use the notation \(\st(R)\) and \(\st(R, A)\) for such Steinberg groups, where \(A\) is an ideal or, more generally, a ring-theoretic crossed module over \(R\). They should not be confused with the stable Steinberg groups such as \(\varinjlim_n \st(n, R)\). In the commutative case we denote the base ring by \(K\) and its ideal (or a commutative crossed module) by \(\mathfrak a\).

As an application, we prove the following: the subgroup of \(\st(\Phi; K, \mathfrak a)\) generated by elementary generators \(x_\alpha(a)\) is normal and contains the commutator, and similarly for linear groups over arbitrary rings. See theorems \ref{f-trans-lin} and \ref{f-trans-chev} in the last section.

\section{Relative linear Steinberg groups}

We use the conventions \(\up gh = ghg^{-1}\) and \([g, h] = ghg^{-1}h^{-1}\) for group theoretical operations. If a group \(G\) acts on a group \(H\) by automorphisms, then we usually denote the action by \(\up gh\), i.e. as the conjugation in \(H \rtimes G\).

Let \(R\) be a unital associative ring. A non-unital \(R\)-algebra \(A\) is a non-unital associative ring and an \(R\)-\(R\)-bimodule with the same addition such that
\[p(ab) = (pa)b,\quad (ab)p = a(bp),\quad (ap)b = a(pb)\]
for all \(a, b \in A\) and \(p \in R\). As in group theory, we say that \(d \colon A \to R\) is a crossed module if \(A\) is a non-unital \(R\)-algebra, \(d\) is a homomorphism of non-unital \(R\)-algebras and \(ab = d(a)\, b = a\, d(b)\) for all \(a, b \in A\). For example, if \(I\) is an ideal of \(R\), then the inclusion map \(d \colon I \to R\) is a crossed module. Another examples are the ring homotopes used in \cite{LinK2}: if \(s \in R\) is a central element, then \(R^{(s)} = \{a^{(s)} \mid a \in R\}\) is a non-unital \(R\)-algebra with the operations
\[a^{(s)} + b^{(s)} = (a + b)^{(s)},\quad a^{(s)} b^{(s)} = (asb)^{(s)},\quad p a^{(s)} = (pa)^{(s)},\quad a^{(s)} p = (ap)^{(s)},\]
and the ring homomorphism \(d \colon R^{(s)} \to R, a^{(s)} \mapsto as\) is a crossed module. The centrality condition is essential.

If \(A\) is a non-unital \(R\)-algebra, then the semi-direct product \(A \rtimes R\) is a ring with an ideal \(A\). As an abelian group it is the direct sum of \(A\) and \(R\), the multiplication is given by
\[(a \oplus p)(b \oplus q) = (ab + aq + pb) \oplus pq.\]
In particular, \(A\) is a crossed module over \(A \rtimes R\).

From now on fix a unital ring \(R\) with a complete family of full orthogonal idempotents \(e_1, \ldots, e_n\) and a non-unital \(R\)-algebra \(A\). In other words, \(e_i^2 = e_i\), \(e_i e_j = 0\) for \(i \neq j\), \(\sum_{i = 1}^n e_i = 1\), and \(R = R e_i R\) as an ideal. An ``isotropic'' linear Steinberg group \(\st(R)\) is the abstract group with the generators \(x_{ij}(p)\) for \(1 \leq i, j \leq n\), \(i \neq j\), \(p \in e_i R e_j\) and the relations
\begin{align*}
x_{ij}(p)\, x_{ij}(q) &= x_{ij}(p + q); \tag{St1} \label{st1}\\
[x_{ij}(p), x_{jk}(q)] &= x_{ik}(pq) \text{ for } i \neq k; \tag{St2} \label{st2}\\
[x_{ij}(p), x_{kl}(q)] &= 1 \text{ for } j \neq k \text{ and } i \neq l; \tag{St3} \label{st3}
\end{align*}
There is a canonical group homomorphism \(\stmap \colon \st(R) \to R^*, x_{ij}(p) \mapsto 1 + p\). If \(R = \mat(n, S)\) is a matrix ring with the diagonal idempotents, then this Steinberg group may be identified with the usual unstable Steinberg group \(\st(n, S)\). An unrelativized Steinberg group \(\st(A)\) is given by the same presentation, but with \(a \in e_i A e_j\) for the generators \(x_{ij}(a)\).

Now we define \(\st'(R, A)\) as the group with an action of \(\st(R)\) generated by elements \(x_{ij}(a)\) for \(1 \leq i, j \leq n\), \(i \neq j\), \(a \in e_i A e_j\) with the relations \fref{st1}, \fref{st2}, \fref{st3} as in the unrelativized case and
\begin{align*}
\up{x_{ij}(p)}{x_{kl}(a)} &= x_{kl}(a) \text{ for } j \neq k \text{ and } l \neq i; \tag{Rel1} \label{rel1}\\
\up{x_{ij}(p)}{x_{jk}(a)} &= x_{ik}(pa)\, x_{jk}(a) \text{ for } i \neq k; \tag{Rel2} \label{rel2}\\
\up{x_{jk}(p)}{x_{ij}(a)} &= x_{ij}(a)\, x_{ik}(-ap) \text{ for } i \neq k. \tag{Rel2\trp} \label{rel2-t}
\end{align*}

Finally, if \(d \colon A \to R\) is a crossed module, then a relative Steinberg group \(\st(R, A)\) is the factor-group of \(\st'(R, A)\) by
\[
\up{x_{ij}(d(a))}g = x_{ij}(a)\, g\, x_{ij}(-a) \text{ for any } g \in \st'(R, A). \tag{Rel3} \label{rel3}
\]

There is a canonical isomorphism \(\st'(R, A) \cong \st(A \rtimes R, A)\) and a decomposition \(\st(A \rtimes R) \cong \st'(R, A) \rtimes \st(R)\) into a semi-direct product. If \(d \colon A \to R\) is a crossed module, then \(\st(R, A) \to \st(R), x_{ij}(a) \mapsto x_{ij}(d(a))\) is a crossed module in the sense of group theory and the sequence
\[\st(R, A) \to \st(R) \to \st(R / d(A)) \to 1\]
is exact.

The opposite ring \(R^\op\) also has a complete family of full orthogonal idempotents \(e_i^\op\) and \(A^\op\) is a non-unital \(R^\op\)-algebra. The transposition maps are the anti-isomorphisms
\begin{align*}
\st(R) \to \st(R^\op), x_{ij}(p) &\mapsto x_{ji}(p^\op);\\
\st'(R, A) \to \st'(R^\op, A^\op), x_{ij}(a) &\mapsto x_{ji}(a^\op).
\end{align*}
In other words, \(x_{ij}(p) \mapsto x_{ji}(-p^\op)\) and \(x_{ij}(a) \mapsto x_{ji}(-a^\op)\) are isomorphisms of these groups. If \(d \colon A \to R\) is a crossed module, then \(d^\op \colon A^\op \to R^\op\) is also a crossed module and there is a transposition map \(\st(R, A) \to \st(R^\op, A^\op)\).

Now we consider some identities in \(\st'(R, A)\) on the elements \(z_{ij}(a, p) = \up{x_{ji}(p)}{x_{ij}(a)}\) for \(a \in e_i A e_j\) and \(p \in e_j R e_i\). In order to write them compactly, we use the abbreviations
\begin{align*}
z_{i, j[k]}(a, b; p) &= z_{ij}(a, p)\, x_{ik}(b)\, x_{jk}(pb) \tag{Z2} \label{z2}\\
&= \up{x_{ji}(p)}{\bigl(x_{ij}(a)\, x_{ik}(b)\bigr)};\\
z_{[i]j, k}(a, b; p) &= z_{jk}(b, p)\, x_{ik}(a)\, x_{ij}(-ap) \tag{Z2\trp} \label{z2-t}\\
&= \up{x_{kj}(p)}{\bigl(x_{ik}(a)\, x_{jk}(b)\bigr)};\\
z_{i \oplus j, k}(a, b; p, q) &= z_{i, k[j]}(a, -aq; p)\, z_{j, k[i]}(b, -bp; q) \tag{Z4} \label{z4}\\
&= \up{x_{ki}(p)\, x_{kj}(q)}{\bigl(x_{ik}(a)\, x_{jk}(b)\bigr)};\\
z_{i, j \oplus k}(a, b; p, q) &= z_{[k]i, j}(qa, a; p)\, z_{[j]i, k}(pb, b; q) \tag{Z4\trp} \label{z4-t}\\
&= \up{x_{ji}(p)\, x_{ki}(q)}{\bigl(x_{ij}(a)\, x_{ik}(b)\bigr)}
\end{align*}
for distinct \(i, j, k\).

\begin{lemma}\label{z-rel}
The elements \(z_{ij}(a, p)\) satisfy the relations
\begin{align*}
z_{ij}(a + a', p) &= z_{ij}(a, p)\, z_{ij}(a', p); \tag{Add1} \label{add1}\\
z_{i, j[k]}(a + a', b + b'; p) &= z_{i, j[k]}(a, b; p)\, z_{i, j[k]}(a', b'; p); \tag{Add2} \label{add2}\\
z_{[i]j, k}(a + a', b + b'; p) &= z_{[i]j, k}(a, b; p)\, z_{[i]j, k}(a', b'; p); \tag{Add2\trp} \label{add2-t}\\
z_{i \oplus j, k}(a + a', b + b'; p, q) &= z_{i \oplus j, k}(a, b; p, q)\, z_{i \oplus j, k}(a', b'; p, q); \tag{Add3} \label{add3}\\
z_{i, j \oplus k}(a + a', b + b'; p, q) &= z_{i, j \oplus k}(a, b; p, q)\, z_{i, j \oplus k}(a', b'; p, q); \tag{Add3\trp} \label{add3-t}\\
\up{z_{ij}(c, p)}{\bigl(x_{ik}(a)\, x_{jk}(b)\bigr)} &= x_{ik}\bigl(a + cb - cpa\bigr)\, x_{jk}\bigl(b + pcb - pcpa\bigr); \tag{Conj1} \label{conj1}\\
\up{z_{ij}(c, p)}{\bigl(x_{ki}(a)\, x_{kj}(b)\bigr)} &= x_{ki}\bigl(a + acp + bpcp\bigr)\, x_{kj}\bigl(b - ac - bpc\bigr); \tag{Conj1\trp} \label{conj1-t}\\
\bigl[x_{jk}(pa)\, x_{ik}(a), x_{kj}(b)\, x_{ki}(-bp)\bigr] &= z_{ij}(ab, p) \text{ for } i \neq j; \tag{Mult} \label{mult}\\
\bigl[z_{ij}(a, p), z_{kl}(b, q)\bigr] &= 1 \text{ for } i, j, k, l \text{ distinct}; \tag{Dis} \label{dis}\\
z_{i \oplus j, k}(a, b; p, q) &= z_{j \oplus i, k}(b, a; q, p); \tag{Sym} \label{sym}\\
z_{i, j \oplus k}(a, b; p, q) &= z_{i, k \oplus j}(b, a; q, p); \tag{Sym\trp} \label{sym-t}\\
\up{z_{ij}(c, r)}{z_{i \oplus j, k}(a, b; p, q)} &= \up{x_{ki}(pcr + qrcr)\, x_{kj}(-pc - qrc)}{z_{i \oplus j, k}\bigl(a + cb - cra, b + rcb - rcra; p, q\bigr)}; \tag{Conj2} \label{conj2}\\
\up{z_{ij}(c, r)}{z_{k, i \oplus j}(a, b; p, q)} &= \up{x_{ik}(cq - crp)\, x_{jk}(rcq - rcrp)}{z_{k, i \oplus j}\bigl(a + acr + brcr, b - ac - brc; p, q\bigr)}; \tag{Conj2\trp} \label{conj2-t}\\
z_{i \oplus j, k}(a, qa; r - pq, p) &= z_{i, j \oplus k}(-ap, a; q, r). \tag{HW} \label{hw}
\end{align*}
in \(\st'(R, A)\). If \(d \colon A \to R\) is a crossed module, then they also satisfy a variant of \fref{rel3}
\[z_{ij}\bigl(a, p + d(b)\bigr) = x_{ji}(b)\, z_{ij}(a, p)\, x_{ji}(-b). \tag{Rel4} \label{rel4}\]
\end{lemma}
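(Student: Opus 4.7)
The strategy is to expand each of the $z$-abbreviations via its second-line (conjugation) description in \fref{z2}--\fref{z4-t} and reduce everything to the generating relations \fref{st1}--\fref{st3} and \fref{rel1}--\fref{rel3}. The additivity relations \fref{add1}--\fref{add3-t} fall out immediately from \fref{st1}: inside the conjugation bracket all the base factors one wishes to split commute pairwise via \fref{st3} (for instance $x_{ij}(a)$ and $x_{ik}(b)$ commute since $i, j, k$ are distinct), so sums may be rebuilt into products without rearranging the conjugator. The symmetry identities \fref{sym}, \fref{sym-t} are immediate from the second-line forms once one notes that $x_{ki}(p), x_{kj}(q)$ commute (and dually $x_{ik}(a), x_{jk}(b)$ commute) by \fref{st3}. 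The crossed-module identity \fref{rel4} is obtained by splitting $x_{ji}(p + d(b)) = x_{ji}(p)\, x_{ji}(d(b))$ via \fref{st1} and then replacing conjugation by $x_{ji}(d(b))$ with conjugation by $x_{ji}(b)$ through \fref{rel3}.

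The single-$z$ conjugation formulas \fref{conj1}, \fref{conj1-t} are direct three-step computations: writing $z_{ij}(c, p) = x_{ji}(p)\, x_{ij}(c)\, x_{ji}(-p)$, one conjugates $x_{ik}(a)\, x_{jk}(b)$ successively by $x_{ji}(-p)$, $x_{ij}(c)$, and $x_{ji}(p)$, applying \fref{rel1}, \fref{rel2}, \fref{rel2-t} at each step and collecting into $x_{ik}$- and $x_{jk}$-factors (which commute by \fref{st3}). The distant commutativity \fref{dis} is easier still: all four indices $i, j, k, l$ are distinct, so every factor of $z_{ij}(a, p)$ commutes with every factor of $z_{kl}(b, q)$ by repeated \fref{rel1}. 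The Steinberg-commutator identity \fref{mult} follows by conjugating the \fref{st2} identity $x_{ij}(ab) = [x_{ik}(a), x_{kj}(b)]$ by $x_{ji}(p)$: the conjugation distributes across the commutator, while the conjugates are given by \fref{rel2} and \fref{rel2-t}.

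For \fref{conj2} and \fref{conj2-t} the plan is to apply the group-theoretic identity $\up{g}{\up{h}{x}} = \up{\up{g}{h}}{\up{g}{x}}$ with $g = z_{ij}(c, r)$ to the second-line description of $z_{i \oplus j, k}(a, b; p, q)$. The conjugation of the inner conjugator $x_{ki}(p)\, x_{kj}(q)$ is given by \fref{conj1-t} as $x_{ki}(p + pcr + qrcr)\, x_{kj}(q - pc - qrc)$; by \fref{st1} together with commutativity of $x_{ki}$ and $x_{kj}$ (which is \fref{st3}), this factors as $x_{ki}(pcr + qrcr)\, x_{kj}(-pc - qrc)\, x_{ki}(p)\, x_{kj}(q)$, and the trailing $x_{ki}(p)\, x_{kj}(q)$ gets reabsorbed into a rebuilt $z_{i \oplus j, k}$ whose arguments $x_{ik}(a + cb - cra)\, x_{jk}(b + rcb - rcra)$ are supplied by \fref{conj1}. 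Identity \fref{conj2-t} is the transpose. For the Hall--Witt-type identity \fref{hw}, the key observation is that both sides can be rewritten as conjugations of the single generator $x_{ik}(a)$: using \fref{st3} and the form $x_{jk}(qa)\, x_{ik}(a) = \up{x_{ji}(q)}{x_{ik}(a)}$ of \fref{rel2}, the left side equals $\up{x_{ki}(r - pq)\, x_{kj}(p)\, x_{ji}(q)}{x_{ik}(a)}$, while \fref{rel2-t} (in the form $\up{x_{kj}(p)}{x_{ik}(a)} = x_{ik}(a)\, x_{ij}(-ap)$) and \fref{st3} rewrite the right side as $\up{x_{ji}(q)\, x_{ki}(r)\, x_{kj}(p)}{x_{ik}(a)}$. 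It then suffices to verify $x_{ki}(r - pq)\, x_{kj}(p)\, x_{ji}(q) = x_{ji}(q)\, x_{ki}(r)\, x_{kj}(p)$ in $\st(R)$, which follows from the commutator $[x_{kj}(p), x_{ji}(q)] = x_{ki}(pq)$ of \fref{st2} combined with the pairwise commutativities of $x_{ji}, x_{ki}, x_{kj}$ supplied by \fref{st3}.

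The main technical obstacle throughout is the volume of bookkeeping in \fref{conj1}, \fref{conj2}, and their transposes: each involves several interleaved applications of \fref{rel2}, \fref{rel2-t}, \fref{st1}, \fref{st3}, and the explicit form of the answer must be verified term by term. The Hall--Witt identity \fref{hw} is conceptually the subtlest, but the reduction to a single-generator conjugation turns it into a short computation inside the absolute $\st(R)$.
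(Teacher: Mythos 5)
Your proposal is correct and follows essentially the same route as the paper: expand each abbreviation via its conjugation form, dispatch the additive/symmetry/distant relations directly from \fref{st1}--\fref{st3} and \fref{rel1}--\fref{rel3}, obtain \fref{mult} by conjugating the commutator relation, \fref{conj2} from the tautology $\up{g}{\up{h}{x}} = \up{\up{g}{h}}{\up{g}{x}}$, and \fref{hw} by rewriting both sides as conjugates of $x_{ik}(a)$ by conjugators equal in $\st(R)$. The only difference is one of presentation: you spell out the bookkeeping (including the $\st(A \rtimes R)$-style splitting $x_{ki}(p + pcr + qrcr) = x_{ki}(pcr+qrcr)\,x_{ki}(p)$ needed when conjugating the inner $\st(R)$-conjugator in \fref{conj2}) that the paper leaves implicit.
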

\begin{proof}
The relations \fref{add1}, \fref{add2}, \fref{add3}, \fref{conj1}, \fref{dis}, \fref{sym}, \fref{rel4} are obvious. The remaining ones may be written as
\begin{align*}
\bigl[\up{x_{ji}(p)}{x_{ik}(a)}, \up{x_{ji}(p)}{x_{kj}(b)}\bigr] &= \up{x_{ji}(p)}{x_{ij}(ab)}; \tag{Mult}\\
\up{z_{ij}(c, r)\, x_{ki}(p)\, x_{kj}(q)}{\bigl(x_{ik}(a)\, x_{jk}(b)\bigr)} &= \up{\up{z_{ij}(c, r)}{(x_{ki}(p)\, x_{kj}(q))}\, z_{ij}(c, r)}{\bigl(x_{ik}(a)\, x_{jk}(b)\bigr)}; \tag{Conj2}\\
\up{x_{ki}(r)\, \up{x_{ji}(q)}{x_{kj}(p)}\, x_{ji}(q)}{x_{ik}(a)} &= \up{x_{ki}(r)\, x_{ji}(q)\, x_{kj}(p)}{x_{ik}(a)}; \tag{HW}
\end{align*}
so they hold in \(\st'(R, A)\). Finally, the relations with the index \(t\) follow by transposition and the commutativity relations \fref{st3}, \fref{add2}, \fref{add2-t}, \fref{sym}, \fref{sym-t}. The relation \fref{hw} is equivalent to its own transpose modulo these commutativity relations.
\end{proof}

\section{Generators and relations}

Let \(\stfree(R, A)\) be the free group generated by the symbols \(z_{ij}(a, p)\), where \(1 \leq i, j \leq n\), \(i \neq j\), \(a \in e_i A e_j\), and \(p \in e_j R e_i\). A transposition map for this group is the anti-isomorphism
\[\stfree(R, A) \to \stfree(R^\op, A^\op), z_{ij}(a, p) \mapsto z_{ji}(a^\op, p^\op).\]
The relations from lemma \ref{z-rel} are formal identities in \(\stfree(R, A)\) if we use the notation \(x_{ij}(a) = z_{ij}(a, 0)\), \fref{z2}, \fref{z2-t}, \fref{z4}, and \fref{z4-t}. In the proofs that some identities from lemma \ref{z-rel} hold in a factor-group of \(\stfree(R, A)\) we usually consider only the non-transposed versions.

Now let \(\stabs'(R, A)\) be the factor-group of \(\stfree(R, A)\) by \fref{add1}, \fref{dis}, \fref{conj2}, \fref{conj2-t}, and \fref{hw}. If \(d \colon A \to R\) is a crossed module, then we define \(\stabs(R, A)\) as the factor-group of \(\stabs'(R, A)\) by \fref{rel4}.

\begin{lemma}\label{abs-rel}
The Steinberg relations and all the identities from lemma \ref{z-rel} except \fref{rel4} hold in \(\stabs'(R, A)\).
\end{lemma}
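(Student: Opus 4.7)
The plan is to derive the listed identities from the defining relations (Add1), (Dis), (Conj2), (Conj2\trp), (HW) of $\stabs'(R, A)$, working outward from the simplest consequences. First I would establish the trivialities $z_{ij}(0, p) = 1$ (and in particular $x_{ij}(0) = 1$) as an immediate consequence of (Add1) with $a = a' = 0$; these clean up the many specialisations that follow. The one-sided conjugation formulas (Conj1) and (Conj1\trp) then drop out of (Conj2) and (Conj2\trp) by setting $p = q = 0$: under this specialisation (Z4) collapses $z_{i \oplus j, k}(a, b; 0, 0)$ to the plain product $x_{ik}(a)\,x_{jk}(b)$, the outer conjugating factors $x_{ki}(\cdots)\,x_{kj}(\cdots)$ in (Conj2) reduce to $1$, and what remains is exactly (Conj1). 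With (Conj1) and (Conj1\trp) at hand, the Steinberg relations on the generators $x_{ij}(a) = z_{ij}(a, 0)$ follow directly: (St1) from (Add1); (St2) from (Conj1) with $a = 0,\ p = 0$; and (St3) from (Dis) for four distinct indices together with (Conj1)/(Conj1\trp) at $p = 0$ handling the degenerate cases $i = k$ and $j = l$.

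With the Steinberg relations on the $x$'s secured, the additivity identities (Add2), (Add2\trp), (Add3), (Add3\trp) follow by unfolding the products (Z2), (Z2\trp), (Z4), (Z4\trp), applying (Add1) componentwise, and using commutativity of disjoint $x$'s to re-bracket as needed. The symmetries (Sym) and (Sym\trp) require a slightly more involved argument: each reduces to swapping the two factors in the expansion of (Z4) (resp.\ (Z4\trp)), which in turn reduces to pairwise commutativities of various $z$'s and $x$'s that can be assembled from (Dis), the Steinberg relations, and (Conj1)/(Conj1\trp).

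The main obstacle is (Mult), since it does not arise as a direct specialisation of any single defining relation; rather, it equates a commutator of two two-term products with the single generator $z_{ij}(ab, p)$. My plan is to recognise the two factors in the $\stabs'$-notation: using (Z4) one has $x_{jk}(pa)\,x_{ik}(a) = z_{j \oplus i, k}(pa, a; 0, 0)$, and via (Z2\trp) one has $x_{kj}(b)\,x_{ki}(-bp) = z_{[k]i, j}(b, 0; p)$. The relation (HW) is the natural tool for simplifying such commutators, as it is the only defining relation bridging the ``$\oplus$-on-the-right'' and ``left-$[\cdot]$'' conventions; combining (HW) with (Conj2\trp) under suitable specialisations should untangle the commutator and isolate $z_{ij}(ab, p)$ after cancellation. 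The bookkeeping is delicate, and I expect the bulk of the proof's work to lie here.

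Finally, each of the transposed identities (Add2\trp), (Add3\trp), (Conj1\trp), (Sym\trp) can be handled by repeating the non-transposed derivation with (Conj2\trp) in place of (Conj2), or, more economically, by invoking the transposition anti-isomorphism $\stfree(R, A) \to \stfree(R^\op, A^\op)$: this exchanges (Conj2) and (Conj2\trp) (and, modulo already-derived commutativities, preserves (HW), which by the remark in the proof of Lemma~\ref{z-rel} is equivalent to its own transpose), hence induces an anti-isomorphism $\stabs'(R, A) \to \stabs'(R^\op, A^\op)$ under which the transposed identities for $R, A$ correspond to the non-transposed identities for $R^\op, A^\op$ and are therefore automatic.
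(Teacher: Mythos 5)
Your overall architecture matches the paper's: specialize \fref{conj2} at \(p = q = 0\) to obtain \fref{conj1}, deduce the Steinberg relations from \fref{add1}, \fref{conj1}, and \fref{dis}, then the additivity identities, and dispose of the transposed versions via the transposition anti-isomorphism (which is exactly how the paper handles them). The genuine gap is your treatment of \fref{sym}. Expanding \fref{z4}, that identity amounts to the commutativity of \(z_{i,k[j]}(a,-aq;p) = z_{ik}(a,p)\,x_{ij}(-aq)\,x_{kj}(-paq)\) with \(z_{j,k[i]}(b,-bp;q) = z_{jk}(b,q)\,x_{ji}(-bp)\,x_{ki}(-qbp)\). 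The two \(z\)-factors \(z_{ik}(a,p)\) and \(z_{jk}(b,q)\) sit at the roots \(\mathrm e_k - \mathrm e_i\) and \(\mathrm e_k - \mathrm e_j\), which are not orthogonal, so \fref{dis} says nothing; and \fref{conj1} only describes how a \(z\) conjugates products of plain \(x\)'s with entries in \(A\) --- it does not govern the conjugation of one \(z\) by another, and there is no copy of \(\st(R)\) inside \(\stabs'(R,A)\) that would let you unwind \(z_{jk}(b,q)\) into \(x\)'s. So the ``pairwise commutativities assembled from \fref{dis}, the Steinberg relations, and \fref{conj1}'' do not suffice. The paper's device is to substitute \(b = ra\), \(q = 0\) into \fref{conj2}: this specialization fixes the arguments on the right-hand side and, after an application of \fref{st3}, yields \fref{sym-t} directly. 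Without some such further use of \fref{conj2}, \fref{sym} --- and hence \fref{add3}, which you also propose to prove by interleaving the two halves of \fref{z4} --- is out of reach.

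Two smaller points. Your route to \fref{mult} via \fref{hw} and \fref{conj2-t} is left unexecuted and is much heavier than needed: in the paper \fref{mult} is simply the specialization \(a = q = r = 0\) of \fref{conj2}, whose left-hand side becomes the conjugate of \(x_{jk}(b)\,x_{ji}(-bp)\) by \(x_{ij}(c)\) and whose right-hand side isolates the generator \(z_{ik}(cb,p)\) up to factors already under control; no appeal to \fref{hw} is required. And your derivation of \fref{add2} by ``commutativity of disjoint \(x\)'s'' glosses over the one nontrivial step, namely commuting \(z_{ij}(a',p)\) past \(x_{ik}(b)\,x_{jk}(pb)\); this is the fixed-vector case of \fref{conj1}, not a disjointness statement. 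Since you have \fref{conj1} in hand this last point is easily repaired, but the \fref{sym} step needs a real fix.
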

\begin{proof}
The identity \fref{conj1} follows from the case \(p = q = 0\) of \fref{conj2}, and \fref{mult} follows from the case \(a = q = r = 0\). The Steinberg relations easily follow from \fref{add1}, \fref{conj1}, and \fref{dis}. Also, \fref{add2} follows from \fref{conj1}, \fref{add1}, and the Steinberg relations.

Substituting \(b = ra\), \(q = 0\) in \fref{conj2} and applying \fref{st3}, we get \fref{sym-t}. Now \fref{add3} follows from \fref{add2} and \fref{sym}.
\end{proof}

By lemma \ref{z-rel} there is a canonical homomorphism \(\mu \colon \stabs'(R, A) \to \st'(R, A), z_{ij}(a, p) \mapsto z_{ij}(a, p)\). So there is a sequence
\[\stfree(R, A) \to \stabs'(R, A) \to \st'(R, A) \xrightarrow{\stmap} \glin(A)\]
and similarly for the relative groups if \(d \colon A \to R\) is a crossed module. Here \(\glin(A)\) is the group of quasi-invertible elements of \(A\), i.e. elements \(a \in A\) such that \(a + b + ab = a + b + ba = 0\) for some \(b \in A\), and \(\stmap \colon x_{ij}(a) \mapsto a\) is \(\st(R)\)-equivariant. The diagonal group
\[\diag(R) = \{r \in R^* \mid e_i r e_j = 0 \text{ for } i \neq j\}\]
acts on the this sequence and its relativized variant by
\[\up r{z_{ij}(a, p)} = z_{ij}(rar^{-1}, rpr^{-1}).\]

Our goal is to show that if \(n \geq 4\), then \(\st(R)\) acts on \(\overline \st'(R, A)\) and \(\mu\) is an isomorphism of groups with actions of \(\st(R)\). Before we start the proof it is useful to simplify calculations in \(\overline \st'(R, A)\).

Let
\[\Phi = \{\mathrm e_i - \mathrm e_j \mid 1 \leq i, j \leq n; i \neq j\} \subseteq \mathbb R^n\]
be the root system of type \(\mathsf A_{n - 1}\), so every symbol \(z_{ij}(-, =)\) in \(\stfree(R, A)\) is indexed by the root \(\mathrm e_j - \mathrm e_i\). A subset \(\Sigma \subseteq \Phi\) is called closed if \(\alpha, \beta \in \Sigma\), \(\alpha + \beta \in \Phi\) imply \(\alpha + \beta \in \Sigma\). A subset \(\Sigma \subseteq \Phi\) is called special closed, if it is closed and does not contain opposite roots. It is well-known that \(\Sigma\) is special closed if and only if it is closed and lies in an open half-space (with the boundary passing through \(0\)). We say that an element \(\alpha\) of a special closed \(\Sigma \subseteq \Phi\) is extreme if it is not a sum of two elements of \(\Sigma\). Hence if \(\alpha \in \Sigma\) is extreme, then \(\Sigma \setminus \{\alpha\} \subseteq \Phi\) is also a special closed subset.

For any \(\alpha = \mathrm e_i - \mathrm e_j \in \Phi\) we use the notation \(A_\alpha = e_i A e_j\), so \(x_\alpha(A_\alpha) = \{x_{ij}(a) \mid a \in A_\alpha\}\) is a subgroup of \(\st(A)\), \(\stabs'(R, A)\), or \(\stabs(R, A)\) depending on the context. If \(\Sigma \subseteq \Phi\) is a special closed subset, then the restriction of \(\st(A) \to \glin(A)\) to the subgroup \(\prod_{\alpha \in \Sigma} x_\alpha(A_\alpha)\) is injective, where the product is taken in any order. Moreover, the product map
\[\prod_{\alpha \in \Sigma} x_\alpha \colon \prod_{\alpha \in \Sigma} A_\alpha \to \prod_{\alpha \in \Sigma} x_\alpha(A_\alpha)\]
is one-to-one.

In the following result we use the notation \(\up g{\{h\}}\) in order to distinguish \(h \in \st(A)\) with its image in \(\stabs'(R, A)\).

\begin{lemma}\label{uni-conj}
Let \(\Sigma \subseteq \Phi\) be a special closed subset. Then for any \(g \in \prod_{\alpha \in \Sigma} x_\alpha(R_\alpha) \leq \st(R)\) there is a unique homomorphism \(\up g{\{-\}_\Sigma} \colon \st(A) \to \stabs'(R, A)\) such that
\begin{align*}
\up{g\, x_{kl}(p)}{\{x_{ij}(a)\}_\Sigma} &= \up g {\{x_{ij}(a)\}_\Sigma} \text{ for } l \neq i \text{ and } j \neq k;\\
\up{g\, x_{ki}(p)}{\{x_{ij}(a)\}_\Sigma} &= \up g {\{x_{kj}(pa)\, x_{ij}(a)\}_\Sigma} \text{ for } j \neq k;\\
\up{g\, x_{jl}(p)}{\{x_{ij}(a)\}_\Sigma} &= \up g {\{x_{ij}(a)\, x_{il}(-ap)\}_\Sigma} \text{ for } l \neq i;\\
\up 1{\{x_{ij}(a)\}_\Sigma} &= x_{ij}(a);\\
\up{x_{ji}(p)}{\{x_{ij}(a)\}_\Sigma} &= z_{ij}(a, p).
\end{align*}
If \(\Sigma' \subseteq \Phi\) is a special closed subset contained in \(\Sigma\) and \(g \in \prod_{\alpha \in \Sigma'} x_\alpha(R_\alpha)\), then \(\up g{\{-\}_\Sigma} = \up g{\{-\}_{\Sigma'}}\), so the index \(\Sigma\) in the notation is redundant.
\end{lemma}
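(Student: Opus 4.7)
The plan is to induct on $|\Sigma|$, exploiting the fact that for a special closed $\Sigma$ and an extreme $\alpha \in \Sigma$, the complement $\Sigma' = \Sigma \setminus \{\alpha\}$ is again special closed, and by the one-to-one product map noted before the lemma every $g \in \prod_{\alpha \in \Sigma} x_\alpha(R_\alpha)$ has a unique factorization $g = g' \cdot x_\alpha(p)$ with $g' \in \prod_{\alpha \in \Sigma'} x_\alpha(R_\alpha)$ once an order placing $\alpha$ last is fixed. The base case $|\Sigma| = 0$ forces $g = 1$, and $\up 1{\{-\}}$ is the canonical homomorphism $\st(A) \to \stabs'(R, A)$, which is well defined by lemma \ref{abs-rel}. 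Uniqueness in general is immediate because the four rules pin down the map on the generators $x_{ij}(a)$, which generate $\st(A)$.

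For the inductive step I would first observe that the formulas \fref{rel1}, \fref{rel2}, \fref{rel2-t} determine, for any single factor $x_\alpha(p)$, a formal conjugation endomorphism $\sigma := \sigma_{x_\alpha(p)}$ of $\st(A)$: the rules on generators respect the Steinberg relations \fref{st1}--\fref{st3} of $\st(A)$ (the only nontrivial check is on \fref{st2}, and is a direct calculation), and an inverse is supplied by $\sigma_{x_\alpha(-p)}$. Then, writing $g = g' \cdot x_\alpha(p)$, set
\[
\up g{\{y\}_\Sigma} := \up{g'}{\{\sigma(y)\}_{\Sigma'}}, \qquad y \in \st(A),
\]
using the inductive hypothesis for the outer map. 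This is manifestly a homomorphism $\st(A) \to \stabs'(R, A)$, and a direct unpacking shows the four recursion rules in the lemma statement.

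The real work lies in verifying that the construction does not depend on the factorization of $g$: any two factorizations in $\prod_\Sigma x_\alpha(R_\alpha)$ differ by Steinberg relations in $\st(R)$ restricted to this subset. Invariance under \fref{st1} follows from \fref{add1}; invariance under \fref{st3} (swap of commuting factors) follows from \fref{dis} combined with the commutativity built into the single-step rules; invariance under \fref{st2}, which produces a third-root factor $x_{\alpha+\beta}(pq)$ still lying in $\Sigma$ by closedness, is the crux and reduces to \fref{conj2}, \fref{conj2-t}, and \fref{hw} applied to configurations of parameters dictated by the positions of the roots involved. Extremality of $\alpha$ keeps the recursion well founded by ensuring that $\alpha + \beta \in \Sigma'$ whenever $\alpha + \beta \in \Phi$, so the commutator term is handled by the inductive hypothesis. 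Once consistency is established, independence from $\Sigma$ is automatic: if $g$ already lies in $\prod_{\Sigma'} x_\alpha(R_\alpha)$, its decomposition has $p = 0$, and $\sigma_{x_\alpha(0)}$ is the identity on $\st(A)$, so the definition collapses to $\up g{\{-\}_{\Sigma'}}$.

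The hardest part will be the \fref{st2} consistency check, where one has to match the two sides of a commutator rewrite $x_\alpha(p)\, x_\beta(q) = x_\beta(q)\, x_\alpha(p)\, x_{\alpha+\beta}(pq)$ after applying $\up{\cdot}{\{x_{ij}(a)\}}$; this splits into sub-cases according to the relative position of $\alpha, \beta, \alpha+\beta$ with respect to the root $\mathrm{e}_j - \mathrm{e}_i$ of the conjugated generator, and each sub-case invokes exactly one of \fref{conj2}, \fref{conj2-t}, \fref{hw}---which is precisely why those identities were included in the defining relations of $\stabs'(R, A)$.
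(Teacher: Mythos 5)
There is a genuine gap at the foundation of your construction. You claim that the rules \fref{rel1}, \fref{rel2}, \fref{rel2-t} determine, for a single factor \(x_\alpha(p)\), a ``formal conjugation endomorphism'' \(\sigma_{x_\alpha(p)}\) of \(\st(A)\). They do not: those three rules cover every pair of roots except the opposite pair, i.e.\ they say nothing about \(\up{x_{ij}(p)}{x_{ji}(a)}\). That conjugate is precisely the new symbol \(z_{ji}(a,p)\), which lives in \(\stabs'(R,A)\) but is not a word in the generators \(x_{kl}(b)\) of \(\st(A)\) (its image in \(\glin\) is not a product of elementary transvections with entries in \(A\)). So \(\sigma_{x_\alpha(p)}\) is undefined exactly on the root subgroup \(x_{-\alpha}(A_{-\alpha})\), your formula \(\up g{\{y\}_\Sigma}=\up{g'}{\{\sigma(y)\}_{\Sigma'}}\) breaks down on those generators, and the ``manifestly a homomorphism'' conclusion — which is the entire content of the lemma — does not follow. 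This is not a fixable oversight within your setup: if you fix one extreme root \(\alpha\) for all of \(\Sigma\), some generator of \(\st(A)\) will have root \(-\alpha\) whenever \(-\alpha\) is a root at all.

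The paper's proof is forced into a different shape by exactly this obstruction. The extreme root \(\beta_1\) that gets peeled off is chosen \emph{per generator} \(x_{ij}(a)\), always with \(\beta_1\neq -(\mathrm e_i-\mathrm e_j)\); only when \(\Sigma=\{-\alpha\}\) is there no such choice, and there one sets \(\up{x_{ji}(p)}{\{x_{ij}(a)\}}=z_{ij}(a,p)\) by fiat. This creates two obligations that your plan absorbs into the nonexistent \(\sigma\): (i) independence of the definition from the choice of extreme root, whose essential case is \(\Sigma=\{\beta_1,\beta_2,\beta_1+\beta_2\}\) with \(\alpha+\beta_1+\beta_2=0\), settled by \fref{hw}; and (ii) the verification that the resulting map on the free group \(\stfree(A)\) kills the Steinberg relations, which is where \fref{dis}, \fref{sym}, \fref{sym-t} and \fref{conj2} are actually used. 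You correctly identified \fref{add1}, \fref{dis}, \fref{conj2}, \fref{conj2-t}, \fref{hw} as the relevant relations, but you deploy them against the wrong consistency problem (rewriting of the factorization of \(g\), which is in fact unique for a fixed order once \(\alpha\) is extreme) rather than against the two checks above. As written, the proposal does not establish the lemma.
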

\begin{proof}
We use induction on the size of \(\Sigma\), the case \(\Sigma = \varnothing\) is obvious. Suppose that for all proper \(\Sigma' \subseteq \Sigma\) such that \(\Sigma'\) is a special closed subset of \(\Phi\) we already have the homomorphisms from the statement. First of all, we define \(\up g{\{-\}_\Sigma}\) on the free group \(\stfree(A)\) generated by all symbols \(x_{ij}(a)\) for \(1 \leq i, j \leq n\), \(i \neq j\), and \(a \in e_i A e_j\).

Take any root \(\alpha = \mathrm e_i - \mathrm e_j \in \Phi\). Suppose that \(\Sigma\) contains an extreme root \(\beta_1 = \mathrm e_k - \mathrm e_l \neq -\alpha\), then any \(g \in \prod_{\beta \in \Sigma} x_\beta(R_\beta)\) has a decomposition \(g = g_1\, x_{kl}(p)\) for \(g_1 \in \prod_{\beta \in \Sigma'} x_\beta(R_\beta)\), where \(\Sigma' = \Sigma \setminus \beta_1\). Let
\begin{align*}
\up g{\{x_{ij}(a)\}_\Sigma} &= \up{g_1}{\{x_{ij}(a)\}_{\Sigma'}} \text{ for } l \neq i \text{ and } j \neq k;\\
\up g{\{x_{ij}(a)\}_\Sigma} &= \up{g_1}{\{x_{kj}(pa)\, x_{ij}(a)\}_{\Sigma'}} \text{ for } l = i \text{ and } j \neq k;\\
\up g{\{x_{ij}(a)\}_\Sigma} &= \up{g_1}{\{x_{ij}(a)\, x_{il}(-ap)\}_{\Sigma'}} \text{ for } l \neq i \text{ and } j = k.
\end{align*}

We have to check that this definition is independent on \(\beta_1\). Let \(\beta_2 \in \Sigma \setminus \{-\alpha, \beta_1\}\) be another extreme root. If \(\alpha \neq -\beta_1 - \beta_2\), then the definitions of \(\up g{\{x_{ij}(a)\}_\Sigma}\) via \(\beta_1\) and via \(\beta_2\) coincide. Indeed, the roots \(\alpha\), \(\beta_1\), \(\beta_2\) lie in a common special closed subset \(\Upsilon \subseteq \Phi\), so \(\up g{\{x_{ij}(a)\}_\Sigma} = \up{g_3}{\{h\}_{\Sigma''}}\) for some canonical \(g_3 \in \prod_{\beta \in \Sigma''} x_\beta(R_\beta)\) and \(h \in \prod_{\gamma \in \Upsilon} x_\gamma(A_\gamma)\), where \(\Sigma'' = \Sigma \setminus \{\beta_1, \beta_2\}\).

Hence we may assume that \(\alpha + \beta_1 + \beta_2 = 0\) and there are no other extreme roots of \(\Sigma\), so \(\Sigma = \{\beta_1, \beta_2, \beta_1 + \beta_2\}\). Without loss of generality, \(\alpha = \mathrm e_i - \mathrm e_k\), \(\beta_1 = \mathrm e_j - \mathrm e_i\), and \(\beta_2 = \mathrm e_k - \mathrm e_j\). In this case we use \fref{hw}:
\[\up{x_{ki}(r - pq)\, x_{kj}(p)}{\{x_{ik}(a)\, x_{jk}(qa)\}_{\Sigma \setminus \{\beta_1\}}} = \up{x_{ji}(q)\, x_{ki}(r)}{\{x_{ij}(-ap)\, x_{ik}(a)\}_{\Sigma \setminus \{\beta_2\}}}.\]
It remains to consider the case when there is no such \(\beta_1\). This means that \(\Sigma = \{-\alpha\}\), so we just define \(\up{x_{ji}(p)}{\{x_{ij}(a)\}_\Sigma} = z_{ij}(a, p)\).

By construction, \(\up g{\{h\}_\Sigma}\) is independent on \(\Sigma\). The uniqueness claim is also trivial.

Now let us check that \(\up g{\{-\}_\Sigma}\) factors through the Steinberg relations (i.e. maps them to identities in \(\stabs'(R, A)\)). For \fref{st1} this is obvious, so we only consider the Steinberg relation for \([x_{\alpha_1}(a), x_{\alpha_2}(b)]\), where \(\alpha_1\) and \(\alpha_2\) are linearly independent roots. If there is an extreme root \(\beta \in \Sigma\) such that \(\alpha_1, \alpha_2, \beta\) lie in a common special closed subset of \(\Phi\), then the claim follows from the construction of \(\up g{\{-\}_\Sigma}\) via \(\beta\). Otherwise let \(\Psi\) be the root subsystem generated by \(\alpha_1\), \(\alpha_2\), and \(\beta\), it is necessary of rank \(2\). If \(\Psi\) is of type \(\mathsf A_1 \times \mathsf A_1\), then we just apply \fref{dis}.

If \(\Psi\) is of type \(\mathsf A_2\) and \(\alpha_1 + \alpha_2 \notin \Phi\), then we apply \fref{sym} or \fref{sym-t}. So without loss of generality \(\alpha_1 = \mathrm e_i - \mathrm e_j\), \(\alpha_2 = \mathrm e_j - \mathrm e_k\), and \(\Sigma \subseteq \{-\alpha_1, -\alpha_2, -\alpha_1 - \alpha_2\}\). Now we do a calculation using \fref{conj2}:
\begin{align*}
\up{x_{ji}(p)\, x_{kj}(q)\, x_{ki}(r)}{\{x_{ij}(a)\, x_{jk}(b)\}_\Sigma}
&= \up{x_{ji}(p)}{\{x_{kj}(ra)\}_\Sigma}\, z_{ij}(a, p)\, \up{x_{ki}(r - qp)\, x_{kj}(q)}{\{x_{jk}(b)\}_\Sigma}\\
&= \up{x_{ki}(r - qp)\, x_{kj}(q)}{\{x_{ik}(ab)\, x_{jk}(b + pab)\}}\, \up{x_{ji}(p)}{\{x_{kj}(ra)\}}\, z_{ij}(a, p)\\
&= \up{x_{ji}(p)\, x_{kj}(q)\, x_{ki}(r)}{\{x_{ik}(ab)\, x_{jk}(b)\, x_{ij}(a)\}_\Sigma}.
\end{align*}

Finally, we have to prove the first three identities from the statement. Let \(\alpha \in \Phi\) and \(\beta \in \Sigma\) be roots appearing in the identity such that \(\alpha \neq -\beta\). Then there is an extreme \(\gamma \in \Sigma\) such that \(\alpha, \beta, \gamma\) lie in a common special closed subset of \(\Phi\). So we apply the construction of \(\up g{\{-\}_\Sigma}\) via \(\gamma\) and use the induction hypothesis.
\end{proof}

Hence in the group \(\stabs'(R, A)\) we have
\begin{align*}
z_{i, j[k]}(a, b; p) &= \up{x_{ji}(p)}{\{x_{ij}(a)\, x_{ik}(b)\}};\\
z_{[i]j, k}(a, b; p) &= \up{x_{kj}(p)}{\{x_{ik}(a)\, x_{jk}(b)\}};\\
z_{i \oplus j, k}(a, b; p, q) &= \up{x_{ki}(p)\, x_{kj}(q)}{\{x_{ik}(a)\, x_{jk}(b)\}};\\
z_{i, j \oplus k}(a, b; p, q) &= \up{x_{ji}(p)\, x_{ki}(q)}{\{x_{ij}(a)\, x_{ik}(b)\}}.
\end{align*}
Also, the diagonal group acts on the homomorphisms from lemma \ref{uni-conj} in the obvious way.

The next result allows us to identify \(\stabs'(R, A)\) with \(\stabs(A \rtimes R, A)\), so in all identities from lemma \ref{z-rel} we may assume that \(p, q, r \in A \rtimes R\) instead of \(p, q, r \in R\), and in lemma \ref{uni-conj} the element \(g\) may be taken in \(\prod_{\alpha \in \Sigma} x_\alpha(A_\alpha \rtimes R_\alpha) \leq \st(A \rtimes R)\).

\begin{lemma}\label{relative}
The natural homomorphism \(\stabs'(R, A) \to \stabs(A \rtimes R, A), z_{ij}(a, p) \mapsto z_{ij}(a, 0 \oplus p)\) is bijective.
\end{lemma}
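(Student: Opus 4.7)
The plan is to construct mutually inverse homomorphisms between \(\stabs'(R, A)\) and \(\stabs(A \rtimes R, A)\). The forward map \(\phi \colon z_{ij}(a, p) \mapsto z_{ij}(a, 0 \oplus p)\) is well-defined by inspection, since each defining relation of \(\stabs'(R, A)\)---namely \fref{add1}, \fref{dis}, \fref{conj2}, \fref{conj2-t}, and \fref{hw}---is literally the specialization to ring parameters lying in \(0 \oplus R\) of the corresponding defining relation of \(\stabs(A \rtimes R, A)\). Moreover, \fref{rel4} in \(\stabs(A \rtimes R, A)\), applied with \(d(b) = b \oplus 0\), yields
\[z_{ij}(a, b \oplus p) = x_{ji}(b)\, z_{ij}(a, 0 \oplus p)\, x_{ji}(-b),\]
so the image of \(\phi\) contains all generators and \(\phi\) is surjective.

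The inverse is therefore forced to be \(\psi(z_{ij}(a, b \oplus p)) = x_{ji}(b)\, z_{ij}(a, p)\, x_{ji}(-b)\), where \(x_{ji}(b) = z_{ji}(b, 0)\) is interpreted in \(\stabs'(R, A)\). The main task is verifying that \(\psi\), initially defined on \(\stfree(A \rtimes R, A)\), factors through each defining relation of \(\stabs(A \rtimes R, A)\). The relations \fref{add1} and \fref{rel4} fall out by direct expansion using \fref{st1} and \fref{add1} in \(\stabs'(R, A)\), while \fref{dis} reduces to commuting elements \(x_{ji}(b)\) past generators indexed by disjoint roots, which is accessible via lemma \ref{uni-conj} together with \fref{st3}.

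The main obstacle is \fref{conj2}, \fref{conj2-t}, and \fref{hw}, where each parameter \(p, q, r\) lives in \(A \rtimes R\) and must be split as \(b \oplus p_0\). My plan is to reduce each such identity to its \(R\)-version, already known to hold in \(\stabs'(R, A)\), by induction on the total number of nonzero \(A\)-parts among the ring parameters. The base case is exactly the \(R\)-version of the relation; each inductive step absorbs one \(A\)-part as an outer conjugation by some \(x_\alpha(b) \in \stabs'(R, A)\), with the resulting reshuffle of products handled by \fref{conj1}, \fref{conj1-t}, \fref{mult}, and the already-verified image of \fref{rel4}. Equivalently, one can upgrade lemma \ref{uni-conj} to allow \(g \in \prod_{\alpha \in \Sigma} x_\alpha((A \rtimes R)_\alpha)\) by factoring \(x_\alpha(b_\alpha \oplus p_\alpha) = x_\alpha(b_\alpha)\, x_\alpha(p_\alpha)\) and interpreting the \(A\)-part as genuine inner conjugation; the \(A \rtimes R\)-versions of \fref{conj2}, \fref{conj2-t}, and \fref{hw} then descend from their \(R\)-counterparts applied to this extended conjugation.

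Once \(\psi\) is known to be well-defined, the composites \(\phi \circ \psi\) and \(\psi \circ \phi\) coincide with the identity on the generating sets by direct inspection---the former via a single application of \fref{rel4}, the latter trivially because \(\psi(z_{ij}(a, 0 \oplus p)) = z_{ij}(a, p)\). The anticipated difficulty is concentrated entirely in the inductive verification of the long relations; once that framework is installed, the remainder is bookkeeping.
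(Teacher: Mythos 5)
Your proposal is correct and follows essentially the same route as the paper: the inverse is defined by \(z_{ij}(a, p_A \oplus p_R) \mapsto \up{x_{ji}(p_A)}{z_{ij}(a, p_R)}\), the relations \fref{add1}, \fref{dis}, \fref{rel4} are immediate, and the mixed-parameter versions of \fref{conj2} and \fref{hw} are reduced to their \(R\)-versions by peeling off the \(A\)-parts as genuine inner conjugations (the paper phrases this as a triple application of \fref{conj2} together with \fref{conj1-t}, and for \fref{hw} as a direct computation via lemma \ref{uni-conj}, which matches your inductive absorption of \(A\)-parts).
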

\begin{proof}
Denote this homomorphism by \(\zeta\). The inverse homomorphism is given by
\[\xi \colon \stfree(A \rtimes R, A) \to \stabs'(R, A), z_{ij}(a, p_A \oplus p_R) \mapsto \up{x_{ji}(p_A)}{z_{ij}(a, p_R)}.\]
Clearly, \(\xi\) factors through \fref{add1}, \fref{dis}, and \fref{rel4}. Also,
\[\xi\bigl(z_{i \oplus j, k}(a, b; p_A \oplus p_R, q_A \oplus q_R)\bigr) = \up{x_{ki}(p_A)\, x_{kj}(q_A)}{z_{i \oplus j, k}(a, b; p_R, q_R)}.\]

For \fref{conj2} if the last two arguments of \(z_{i \oplus j, k}\) from the left hand side are in \(R\), then the image of the identity under \(\xi\) holds in \(\stabs'(R, A)\) by a triple application of \fref{conj2}. The general case reduces to this special case and \fref{conj1-t} by the formula for \(\xi\bigl(z_{i \oplus j, k}(a, b; p_A \oplus p_R, q_A \oplus q_R)\bigr)\) above.

Finally, for \fref{hw} we do the following calculation in \(\stabs'(R, A)\) using lemma \fref{uni-conj}:
\begin{align*}
\xi\bigl(z_{i \oplus j, k}\bigl(a, q_A a + q_R a;{} &(r_A \oplus r_R) - (p_A \oplus p_R) (q_A \oplus q_R), p_A \oplus p_R\bigr)\bigr)\\
&= \up{x_{ji}(q_A)\, x_{ki}(r_A - p_A q_R)\, x_{kj}(p_A)}{\bigl(\up{x_{ji}(q_R)\, x_{ki}(r_R)\, x_{kj}(p_R)}{\{x_{ik}(a)\}}\bigr)}\\
&= \xi\bigl(z_{i, j \oplus k}(-ap_A - ap_R, a; q_A \oplus q_R, r_A \oplus r_R)\bigr).
\end{align*}

Hence \(\xi \colon \stabs(A \rtimes R, A) \to \stabs'(R, A)\) is well-defined. Clearly, it is the inverse of \(\zeta\).
\end{proof}

Actually, the identities \fref{conj2} and \fref{conj2-t} in the definition of \(\stabs(R, A)\) may be replaced by
\begin{align*}
\up{z_{ij}(c, r)}{z_{i \oplus j, k}(a, b; p, q)} &= z_{i \oplus j, k}\bigl(a + cb - cra, b + rcb - rcra; p + d(pcr + qrcr), q - d(pc + qrc)\bigr); \tag{Conj2'} \label{conj2'}\\
\up{z_{ij}(c, r)}{z_{k, i \oplus j}(a, b; p, q)} &= z_{k, i \oplus j}\bigl(a + acr + brcr, b - ac - brc; p + d(cq - crp), q + d(rcq - rcrp)\bigr). \tag{Conj2\trp'} \label{conj2'-t}
\end{align*}
Indeed, they follow from the ordinary \fref{conj2} and \fref{conj2-t} by lemma \ref{relative}. Conversely, taking \(p = q = 0\) in \fref{conj2'} we obtain \fref{conj1}, and the Steinberg relations formally follow from \fref{add1}, \fref{conj1}, \fref{conj1-t}, and \fref{dis}. The identity \fref{conj2} follows from a combination of \fref{conj2'}, \fref{conj1}, the Steinberg relations, and \fref{rel4}.

\section{Root elimination: construction} \label{s-root-elim}

From now on we fix a crossed module \(d \colon A \to R\). All our further results have versions for the groups \(\stabs'\) associated with any non-unital \(R\)-algebra \(A\) by lemma \ref{relative}.

In order to construct an action of \(\st(R)\) on \(\stabs(R, A)\) we use the root elimination technique from \cite{LinK2}. Let \(\Psi \subseteq \Phi\) be a root subsystem (necessarily closed) with the span \(\mathbb R \Psi\). We denote the image of \(\Phi \setminus \Psi\) in \(\mathbb R^n / \mathbb R \Psi\) by \(\Phi / \Psi\) and the map \(\Phi \setminus \Psi \to \Phi / \Psi\) by \(\pi_\Psi\). Let \(e_i \sim e_j\) if \(i = j\) or \(\mathrm e_i - \mathrm e_j \in \Psi\), this is an equivalence relation on our family of idempotents in \(R\) since \(\Psi\) is closed and \(\Psi = -\Psi\). For an equivalence class \(\{e_i\}_{i \in I}\) let \(e_I = \sum_{i \in I} e_i\), these elements also form a complete family of full orthogonal idempotents. It follows that the set \(\Phi / \Psi\) itself is a root system of type \(\mathsf A_{n - 1 - m}\) with a suitable dot product, where \(m\) is the rank of \(\Psi\), it parametrizes the root subgroups for a new family of idempotents. Let us denote the groups associated with the new family by \(\st(A, \Phi / \Psi)\), \(\stfree(R, A; \Phi / \Psi)\), \(\stabs(R, A; \Phi / \Psi)\), \(\st(R, A; \Phi / \Psi)\), and \(\diag(R, \Phi / \Psi)\). For the groups associated with the old family of idempotents we add a parameter \(\Phi\) in the notation.

There is a one-to-one correspondence between root subsystems of \(\Phi\) containing \(\Psi\) and root subsystems of \(\Phi / \Psi\) given by the direct and inverse images under \(\pi_\Psi\). If \(\Xi \subseteq \Phi / \Psi\) is a root subsystem, then the root systems \((\Phi / \Psi) / \Xi\) and \(\Phi / \pi_\Psi^{-1}(\Xi)\) are canonically isomorphic. We write \(\Phi / \alpha\) instead of \(\Phi / \{-\alpha, \alpha\}\).

For a root \(\alpha = \mathrm e_l - \mathrm e_m\) consider the homomorphism
\begin{align*}
F_\alpha \colon \st(A, \Phi / \alpha) &\to \st(A, \Phi),\\
x_{ij}(a) &\mapsto x_{ij}(a) \text{ for } i, j \neq \infty;\\
x_{\infty j}(a) &\mapsto x_{lj}(e_l a)\, x_{mj}(e_m a);\\
x_{i \infty}(a) &\mapsto x_{il}(a e_l)\, x_{im}(a e_m);
\end{align*}
where \(e_\infty = e_l + e_m\) is the new idempotent. Clearly, it is well-defined. Composing these homomorphisms for various roots, we obtain homomorphisms \(F_\Psi \colon \st(A, \Phi / \Psi) \to \st(A, \Phi)\) for any root subsystem \(\Psi \subseteq \Phi\).

The following lemma is just a generalization of \fref{conj2} and \fref{conj2-t}.

\begin{lemma}\label{z-conj}
Let \(\Psi \subseteq \Phi\) be a root subsystem, \(\Sigma \subseteq \Phi / \Psi\) be a special closed subset. Then for all elements \(f \in \langle z_{\alpha}(A_\alpha, R_{-\alpha}) \mid \alpha \in \Psi \rangle \leq \stabs(R, A; \Phi)\), \(g \in \prod_{\beta \in \Sigma} x_\beta(R_\beta) \leq \st(R, \Phi / \Psi)\), \(h \in \st(A, \Phi / \Psi)\) we have
\[f\, \up{F_\Psi(g)}{\{F_\Psi(h)\}}\, f^{-1} = \up{F_\Psi(\up{\stmap(f)}{g})}{\bigl\{F_\Psi\bigl(\up{\stmap(f)}{h}\bigr)\bigr\}} \in \stabs(R, A; \Phi),\]
where \(\stmap(f)\) is the image of \(f\) in \(\glin(R)\).
\end{lemma}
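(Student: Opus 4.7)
The plan is to prove the identity by three nested reductions, each exploiting multiplicativity, before finishing with a base case computation.

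First, observe that both sides are group homomorphisms in $f$: the left side because conjugation distributes over products, and the right side because the restriction of $\stmap$ to $\langle z_\alpha(A_\alpha, R_{-\alpha}) \mid \alpha \in \Psi \rangle$ lands in the block-diagonal Levi subgroup of $R^*$ determined by the partition of $\{1,\ldots,n\}$ induced by $\Psi$. This Levi acts on both $\st(R, \Phi/\Psi)$ and $\st(A, \Phi/\Psi)$ by the rule $\up u{x_{IJ}(a)} = x_{IJ}(u_I a u_J^{-1})$, compatibly with the group structure, so the right side is multiplicative in $f$. Hence it suffices to treat $f = z_\gamma(c, r)$ for a single root $\gamma \in \Psi$.

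Second, by Lemma \ref{uni-conj}, the assignment $h \mapsto \up{F_\Psi(g)}{\{F_\Psi(h)\}}$ defines a homomorphism $\st(A, \Phi/\Psi) \to \stabs(R, A; \Phi)$; note here that $\pi_\Psi^{-1}(\Sigma) \subseteq \Phi$ is itself a special closed subset, since if $\alpha_1, \alpha_2 \in \pi_\Psi^{-1}(\Sigma)$ with $\alpha_1 + \alpha_2 \in \Phi$, then $\alpha_1 + \alpha_2 \notin \Psi$ (else $\Sigma$ would contain opposites) and $\pi_\Psi(\alpha_1) + \pi_\Psi(\alpha_2) \in \Sigma$ by closedness of $\Sigma$. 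Both sides of the target identity are therefore homomorphisms in $h$, so we may reduce to $h = x_\delta(a)$ for a single $\delta \in \Phi/\Psi$.

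Third, I induct on $|\Sigma|$. In the base case $\Sigma = \varnothing$, the claim becomes
\[\up{z_\gamma(c, r)}{F_\Psi(x_\delta(a))} = F_\Psi\bigl(\up{\stmap(z_\gamma(c, r))}{x_\delta(a)}\bigr),\]
where $F_\Psi(x_\delta(a))$ is a product of at most two elementary generators in $\st(A, \Phi)$. The verification is a finite case analysis on how the indices of $\gamma$ meet (or avoid) the block endpoints of $\delta$, with each case following from \fref{conj1}, \fref{conj1-t}, \fref{dis}, or the Steinberg relations. For the inductive step, pick an extreme root $\beta \in \Sigma$ and decompose $g = g_1 \cdot x_\beta(p)$. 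The universality in Lemma \ref{uni-conj} gives
\[\up{F_\Psi(g)}{\{F_\Psi(h)\}} = \up{F_\Psi(g_1)}{\{F_\Psi({}^{x_\beta(p)} h)\}},\]
where ${}^{x_\beta(p)} h$ is computed in $\st(A, \Phi/\Psi)$. Apply the inductive hypothesis to $\Sigma \setminus \{\beta\}$ with $h$ replaced by ${}^{x_\beta(p)} h$; then use that the Levi action of $\stmap(f)$ is a group automorphism of $\st(A, \Phi/\Psi)$ and $\st(R, \Phi/\Psi)$, so $\up{\stmap(f)}{g} = \up{\stmap(f)}{g_1} \cdot \up{\stmap(f)}{x_\beta(p)}$ and $\up{\stmap(f)}{({}^{x_\beta(p)} h)} = {}^{\up{\stmap(f)}{x_\beta(p)}}\up{\stmap(f)}{h}$. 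Re-applying the universality in the reverse direction recovers exactly the right-hand side of the target identity.

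The main technical obstacle is the base case: when $\gamma = \mathrm{e}_i - \mathrm{e}_j$ shares an idempotent with the block endpoints of $\delta$, the image $F_\Psi(x_\delta(a))$ genuinely factors into two elementary generators of $\st(A, \Phi)$, and one must verify that their joint conjugation by $z_\gamma(c, r)$ reassembles precisely into $F_\Psi$ applied to the Levi-twisted generator. This amounts to matching the additive corrections appearing in \fref{conj1} and \fref{conj1-t} with the block-multiplication performed by $\stmap(z_\gamma(c, r)) = (1 + r)(1 + c)(1 - r)$; the calculation is bounded but requires careful bookkeeping, especially when the block of $\gamma$ coincides with both endpoints of a root $\delta$ whose preimage under $\pi_\Psi$ has multiple components.
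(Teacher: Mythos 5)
Your first two reductions (multiplicativity in \(f\), which lets you take \(\Psi\) of rank one and \(f = z_\gamma(c,r)\) a single generator, and multiplicativity in \(h\) via lemma \ref{uni-conj}) are sound and agree with the paper. The gap is in the induction on \(|\Sigma|\). The step
\(\up{F_\Psi(g_1\, x_\beta(p))}{\{F_\Psi(h)\}} = \up{F_\Psi(g_1)}{\{F_\Psi(\up{x_\beta(p)}{h})\}}\)
presupposes that \(\up{x_\beta(p)}{h}\) is again an element of \(\st(A, \Phi/\Psi)\). That is true only when no root occurring in \(h\) equals \(-\beta\): the conjugation formulas \fref{rel1}, \fref{rel2}, \fref{rel2-t}, and correspondingly the first three identities of lemma \ref{uni-conj}, all carry the hypotheses \(l \neq i\), \(j \neq k\) that exclude the opposite-root configuration. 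When \(h = x_{-\beta}(a)\), the element \(\up{x_\beta(p)}{x_{-\beta}(a)}\) is the generator \(z_{-\beta}(a,p)\) of the \emph{relative} group, not an element of the unrelativized \(\st(A, \Phi/\Psi)\), so the reduction does not apply. After your second reduction \(h = x_\delta(a)\), the unavoidable terminal case is \(\Sigma = \{-\delta\}\), \(g = x_{-\delta}(p)\): here the only extreme root of \(\Sigma\) is \(-\delta\) itself and nothing can be peeled off. Your induction therefore never reaches the base case \(\Sigma = \varnothing\) in precisely the situation that carries all the content.

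That terminal case is, after applying \(F_\Psi\), exactly \fref{dis} when \(\delta\) avoids the collapsed index \(\infty\), and exactly \fref{conj2} or \fref{conj2-t} when it does not — which is why the paper calls the lemma ``just a generalization of \fref{conj2} and \fref{conj2-t}.'' Your write-up never invokes \fref{conj2}; if the argument worked as stated it would derive \fref{conj2} from \fref{conj1}, \fref{dis} and the Steinberg relations, which is false (\fref{conj2} is an independent defining relation of \(\stabs'(R,A)\); the paper only shows it can be traded for \fref{conj2'} in the presence of \fref{rel4}). The correct unwinding goes in the opposite direction: by the \emph{construction} in the proof of lemma \ref{uni-conj}, the element \(\up{F_\Psi(g)}{\{F_\Psi(h)\}}\) decomposes as a product of terms \(\up{F_\Psi(x_{ji}(q))}{\{F_\Psi(x_{ij}(b))\}}\) with opposite roots (i.e.\ images of single \(z\)-generators of \(\stabs(R,A;\Phi/\Psi)\)), in a way compatible with the diagonal action of \(\stmap(f)\) on the arguments; conjugating each such term by \(f\) is then handled by \fref{dis}, \fref{conj2}, or \fref{conj2-t}. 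Your base-case computation with \fref{conj1}, \fref{conj1-t} is not wrong, but it treats only the part of the statement that is already a special case of \fref{conj2}.
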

\begin{proof}
Note that \(\stmap(f)\) actually lies in \(\diag(R, \Phi / \Psi)\), so it acts on the unrelativized Steinberg groups associated with \(\Phi / \Psi\). Also, \(\pi_\Psi^{-1}(\Sigma) \subseteq \Phi\) is a special closed subset, so the left hand side is defined. Without loss of generality, \(\Psi = \{-\alpha, \alpha\}\) for a root \(\alpha = \mathrm e_l - \mathrm e_m\) and \(f = z_{lm}(a, p)\). By the proof of lemma \ref{uni-conj} applied to \(\Phi / \alpha\), the element \(\up{F_\alpha(g)}{\{F_\alpha(h)\}}\) is a product of various \(\up{F_\alpha(x_{ji}(q))}{\{F_\alpha(x_{ij}(b))\}}\) for \(\mathrm e_i - \mathrm e_j \in \Phi / \alpha\) and such a decomposition preserves the action of \(\stmap(f)\) on the arguments, so it remains to consider the case \(\Sigma = \{\mathrm e_j - \mathrm e_i\}\), \(g = x_{ji}(q)\), and \(h = x_{ij}(b)\). Let \(e_\infty = e_l + e_m\) be the new idempotent. If \(i, j \neq \infty\), then the identity follows from \ref{dis}. Else this is precisely \ref{conj2} or \ref{conj2-t}.
\end{proof}

Now we define \(F_\Psi\) for the Steinberg groups \(\stabs\). For a root \(\alpha = \mathrm e_l - \mathrm e_m\) consider the homomorphism
\begin{align*}
F_\alpha \colon \stfree(R, A; \Phi / \alpha) &\to \stabs(R, A; \Phi);\\
z_{ij}(a, p) &\mapsto z_{ij}(a, p) \text{ for } i, j \neq \infty;\\
z_{\infty j}(a, p) &\mapsto z_{l \oplus m, j}(e_l a, e_m a; p e_l, p e_m);\\
z_{i \infty}(a, p) &\mapsto z_{i, l \oplus m}(a e_l, a e_m; e_l p, e_m p);
\end{align*}
where \(e_\infty = e_l + e_m\) is the new idempotent. The next lemma implies that \(F_\Psi \colon \stabs(R, A; \Phi / \Psi) \to \stabs(R, A; \Phi)\) is well-defined for any root subsystem \(\Psi \subseteq \Phi\).

\begin{lemma}\label{elim-map}
The homomorphism \(F_\alpha \colon \stabs(R, A; \Phi / \alpha) \to \stabs(R, A; \Phi)\) is well-defined for any root \(\alpha\). If \(\alpha, \beta \in \Phi\) are linearly independent and \(\Psi = \Phi \cap (\mathbb R\alpha + \mathbb R\beta)\), then
\[F_\alpha \circ F_{\pi_\alpha(\beta)} = F_\beta \circ F_{\pi_\beta(\alpha)} \colon \stabs(R, A; \Phi / \Psi) \to \stabs(R, A; \Phi).\]
\end{lemma}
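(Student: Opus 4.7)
The plan is to verify that $F_\alpha$ respects each defining relation of $\stabs(R, A; \Phi/\alpha)$, and then to establish the compatibility by case analysis on the rank-two subsystem $\Psi$.

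For the well-definedness of $F_\alpha$, relations not involving the merged idempotent $e_\infty = e_l + e_m$ are automatic, so the content lies in the relations involving a symbol $z_{\infty, j}$ or $z_{i, \infty}$. In each such case, the definition of $F_\alpha$ produces by construction the abbreviations \fref{z4} or \fref{z4-t} specialised to entries factoring through $e_l$ and $e_m$. Consequently \fref{add1} reduces to \fref{add3} (or \fref{add3-t}), which already hold in $\stabs(R, A; \Phi)$ by lemma \ref{abs-rel}; the disjointness relation \fref{dis} follows from its $\Phi$-counterpart after expanding $z_{l \oplus m, j}$ via \fref{z4} into $x$-symbols supported on $\{l, m, j\}$; and \fref{rel4} is preserved because the crossed module $d$ is an $R$-$R$-bimodule homomorphism and therefore respects the splitting $e_\infty = e_l + e_m$. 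The substantial relations \fref{conj2} and \fref{conj2-t} are exactly the content of lemma \ref{z-conj} applied with $\Psi = \{-\alpha, \alpha\}$: both sides in $\stabs(R, A; \Phi/\alpha)$ are images of conjugations of the form $\up{F_\alpha(g)}{\{F_\alpha(h)\}}$ by the $\Psi$-supported element $z_{ij}(c, r)$, and the lemma equates them. The identity \fref{hw} is handled in the same manner by lifting both of its sides to $\up{F_\alpha(g)}{\{F_\alpha(h)\}}$ for a common pair and invoking \fref{hw} inside $\stabs(R, A; \Phi)$ together with lemma \ref{relative}.

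For the compatibility $F_\alpha \circ F_{\pi_\alpha(\beta)} = F_\beta \circ F_{\pi_\beta(\alpha)}$ I split on the type of $\Psi$. If $\Psi$ is of type $\mathsf A_1 \times \mathsf A_1$, then $\alpha$ and $\beta$ involve four disjoint indices, so the two eliminations act on independent pairs of idempotents and the equality on each generator is immediate from the definition. If $\Psi$ is of type $\mathsf A_2$, then both composites merge the three idempotents $e_l, e_m, e_n$ occurring in $\Psi$ into a common $e_\infty = e_l + e_m + e_n$, but via two different intermediate pairings. Evaluating both composites on a generator of the form $z_{\infty, j}(a, p)$ expands it in each case into a product of contributions indexed by the three summands, and the resulting expressions are matched using \fref{add3}, \fref{sym}, \fref{sym-t}, and the bookkeeping of \fref{z4}.

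The principal obstacle will be the calculus around \fref{conj2} and \fref{hw}: translating the defining relations of $\stabs(R, A; \Phi/\alpha)$, whose arguments are arbitrary elements over the coarsened idempotents, into matching conjugations in $\stabs(R, A; \Phi)$ after the substitution $e_\infty = e_l + e_m$ requires careful unpacking of \fref{z2}--\fref{z4-t} together with a clean application of lemma \ref{z-conj}. Once that scaffolding is in place, both the remaining defining relations and the $\mathsf A_2$ case of the compatibility reduce to combinatorial identities among products of $x$-symbols.
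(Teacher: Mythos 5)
Your overall strategy coincides with the paper's: the easy relations are dispatched as you say, \fref{conj2} is reduced to lemma \ref{z-conj}, \fref{hw} is verified by rewriting both sides in the form \(\up{g}{\{h\}}\), and the compatibility statement is read off from the explicit formulas for \(F_\alpha(z_{i \oplus j, k}(a, b; p, q))\) with indices possibly equal to \(\infty\). However, your instantiation of lemma \ref{z-conj} is wrong, and as stated that step fails. In \fref{conj2} for \(\Phi / \alpha\) the conjugating element is \(z_{ij}(c, r)\) for a root \(\mathrm e_i - \mathrm e_j\) of \(\Phi / \alpha\); it is never supported on \(\{\pm\alpha\}\), and its image \(F_\alpha(z_{ij}(c, r))\) lies in \(\langle z_\gamma(A_\gamma, R_{-\gamma}) \mid \gamma \in \Psi \rangle\) only for \(\Psi\) equal to the rank-one subsystem \(\{\pm(\mathrm e_i - \mathrm e_j)\}\) when \(i, j \neq \infty\), or to the \(\mathsf A_2\) subsystem on the indices \(\{l, m, j\}\) when, say, \(i = \infty\) (expand \(z_{l \oplus m, j}\) via \fref{z4} and \fref{z2} to see this). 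Lemma \ref{z-conj} with \(\Psi = \{-\alpha, \alpha\}\) says nothing about conjugation by such elements. The correct application takes \(\Psi\) to be the subsystem just described and additionally requires rewriting \(F_\alpha(z_{i \oplus j, k}(\cdots))\) in the form \(\up{F_\Psi(g)}{\{F_\Psi(h)\}}\) for \emph{that} \(\Psi\) — possible because the products of root elements over \(\Phi\) appearing in the displayed formulas regroup into products over \(\Phi / \Psi\) — and only then does the lemma equate the two sides of \fref{conj2}.

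Two smaller points. For \fref{hw} the paper does not invoke \fref{hw} of \(\stabs(R, A; \Phi)\) or lemma \ref{relative}; it identifies both sides with \(\up g{\{h\}}\) for literally the same pair \((g, h)\), the equality of the two conjugators being a Steinberg-relation computation in \(\st(R)\), since the \(\Phi\)-level \fref{hw} is already absorbed into the well-definedness of \(\up g{\{-\}}\) in lemma \ref{uni-conj}. And in your treatment of \fref{dis}, the expansion of \(z_{l \oplus m, j}\) produces genuine \(z\)-symbols \(z_{lj}(a, p)\) with \(p \neq 0\), not only \(x\)-symbols; \fref{dis} in \(\Phi\) still covers all the resulting commutators, so that step survives, but the phrasing should be corrected.
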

\begin{proof}
Clearly, \(F_\alpha\) factors through \fref{add1}, \fref{dis}, and \fref{rel4}. Let \(\alpha = \mathrm e_l - \mathrm e_m\) and \(e_\infty = e_l + e_m\) be the new idempotent. Below all the indices are distinct.

We have
\begin{align*}
F_\alpha(z_{i \oplus j, k}(a, b; p, q)) &= \up{x_{ki}(p)\, x_{kj}(q)}{\{x_{ik}(a)\, x_{jk}(b)\}};\\
F_\alpha(z_{\infty \oplus j, k}(a, b; p, q)) &= \up{x_{kl}(p e_l)\, x_{km}(p e_m)\, x_{kj}(q)}{\{x_{lk}(e_l a)\, x_{mk}(e_m a)\, x_{jk}(b)\}};\\
F_\alpha(z_{i \oplus \infty, k}(a, b; p, q)) &= \up{x_{ki}(p)\, x_{kl}(q e_l)\, x_{km}(q e_m)}{\{x_{ik}(a)\, x_{lk}(e_l b)\, x_{mk}(e_m b)\}};\\
F_\alpha(z_{i \oplus j, \infty}(a, b; p, q)) &= \up{x_{li}(e_l p)\, x_{mi}(e_m p)\, x_{lj}(e_l q)\, x_{mj}(e_m q)}{\{x_{il}(a e_l)\, x_{im}(a e_m)\, x_{jl}(b e_l)\, x_{jm}(b e_m)\}};
\end{align*}
and the transposed versions directly by definitions.

Lemma \ref{z-conj} implies that \(F_\alpha\) factors through \fref{conj2}. For \fref{hw} we consider all nontrivial cases, i.e. the cases involving the index \(\infty\):
\begin{align*}
F_\alpha(z_{\infty \oplus j, k}(a, qa; r - pq, p))
&= \up{x_{jl}(q e_l)\, x_{jm}(q e_m)\, x_{kl}(r e_l)\, x_{km}(r e_m)\, x_{mj}(p)}{\{x_{lk}(e_l a)\, x_{mk}(e_m a)\}}\\
&= F_\alpha(z_{\infty, j \oplus k}(-ap, a; q, r));\\
F_\alpha(z_{i \oplus \infty, k}(a, qa; r - pq, p))
&= \up{x_{li}(e_l q)\, x_{mi}(e_m q)\, x_{ki}(r)\, x_{kl}(p e_l)\, x_{km}(p e_m)}{\{x_{ik}(a)\}}\\
&= F_\alpha(z_{i, \infty \oplus k}(-ap, a; q, r));\\
F_\alpha(z_{i \oplus j, \infty}(a, qa; r - pq, p))
&= \up{x_{ji}(q)\, x_{li}(e_l r)\, x_{mi}(e_m r)\, x_{lj}(e_l p)\, x_{mj}(e_m p)}{\{x_{il}(a e_l)\, x_{im}(a e_m)\}}\\
&= F_\alpha(z_{i, j \oplus \infty}(-ap, a; q, r)).
\end{align*}

The last claim follows from the formulas for \(F_\alpha(z_{i \oplus j, k}(a, b; p, q))\), where the indices may coincide with \(\infty\).
\end{proof}

There is also a useful formula
\[F_\Psi\bigl(\up g{\{h\}}\bigr) = \up{F_\Psi(g)}{\{F_\Psi(h)\}} \in \stabs(R, A),\]
where \(\Psi \subseteq \Phi\) is a root subsystem, \(g \in \prod_{\alpha \in \Sigma} x_\alpha(R_\alpha) \leq \st(R, \Phi / \Psi)\) for some special closed \(\Sigma \subseteq \Phi / \Psi\), and \(h \in \st(A, \Phi / \Psi)\).

\begin{lemma}\label{elim-sur}
Let \(\alpha \in \Phi\) be a root and suppose that \(n \geq 3\). Then \(F_\alpha \colon \stabs(R, A; \Phi / \alpha) \to \stabs(R, A; \Phi)\) is surjective.
\end{lemma}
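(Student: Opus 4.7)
Write $\alpha = \mathrm e_l - \mathrm e_m$, fix an auxiliary index $h \notin \{l, m\}$ (available since $n \geq 3$), and set $N = \mathrm{Im}(F_\alpha) \leq \stabs(R, A; \Phi)$. The plan is to show every generator $z_{ij}(a, p)$ lies in $N$. For $\{i, j\} \neq \{l, m\}$ this is easy: evaluate $F_\alpha$ on $z_{\infty j}(-, -)$ or $z_{j\infty}(-, -)$ with arguments concentrated on a single idempotent among $e_l, e_m$ and simplify using \fref{z4}, \fref{z2}, and the vanishing $z_{ij}(0, \cdot) = 1 = x_{ij}(0)$ from \fref{add1}. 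The real task is to realize $z_{lm}(a, p)$ and $z_{ml}(a, p)$ in $N$.

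Setting $a_l \in e_l A e_h$, $p_m \in e_h R e_m$ and expanding as above gives
\[F_\alpha(z_{\infty h}(a_l, p_m)) \;=\; x_{lh}(a_l)\, x_{lm}(-a_l p_m),\]
so $x_{lm}(-a_l p_m) \in N$. Since $e_h$ is a full idempotent in $R$, the relation $R = R e_h R$ yields $e_l A e_m = \sum (e_l A e_h)(e_h R e_m)$, where the product is the right $R$-action on $A$; combined with \fref{add1} applied to $x_{lm}(\cdot) = z_{lm}(\cdot, 0)$ this places $x_{lm}(c) \in N$ for every $c \in e_l A e_m$. Exchanging the roles of $l$ and $m$ in the same computation gives $x_{ml}(c) \in N$ for every $c \in e_m A e_l$.

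For general $z_{lm}(a, r)$ I invoke \fref{hw} with indices $(l, h, m)$ in place of $(i, j, k)$ and parameters $a \in e_l A e_m$, $q \in e_h R e_l$, $p \in e_m R e_h$, $r \in e_m R e_l$. Fully expanding both sides via \fref{z4}, \fref{z4-t}, \fref{z2}, \fref{z2-t} yields a relation of the form
\[z_{lm}(a, r - pq)\, W_1 \;=\; W_2\, z_{lm}(a, r)\, W_3,\]
where each $W_\nu$ is a product of factors $z_{ij}(\cdot, \cdot)$ or $x_{ij}(\cdot)$ with $\{i, j\} \neq \{l, m\}$ together with $x_{lm}$'s and $x_{ml}$'s --- all of which sit in $N$ by the previous step. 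Thus $z_{lm}(a, r) \in N$ implies $z_{lm}(a, r - pq) \in N$. Iterating this implication starting from $z_{lm}(a, 0) = x_{lm}(a) \in N$ gives $z_{lm}(a, -\sum_i p_i q_i) \in N$ for any finite sequence with $p_i \in e_m R e_h$, $q_i \in e_h R e_l$; and the identity $e_m R e_l = \sum (e_m R e_h)(e_h R e_l)$ (again from $R = R e_h R$) then covers every $r \in e_m R e_l$. Re-running this third step with $l$ and $m$ swapped handles $z_{ml}$, so $N = \stabs(R, A; \Phi)$.

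The delicate point I expect is the bookkeeping in the final step: one must verify that the full \fref{hw} expansion really does isolate $z_{lm}(a, r - pq)$ and $z_{lm}(a, r)$ as the only factors with index pair $\{l, m\}$ besides pure $x_{lm}$ and $x_{ml}$ terms. The hypothesis $n \geq 3$ enters precisely in providing the auxiliary $h$ used both for extracting the $x_{lm}, x_{ml}$ elements and for invoking \fref{hw}.
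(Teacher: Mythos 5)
Your proof is correct and follows essentially the same route as the paper: reduce to the hard case $\{i,j\}=\{l,m\}$ and handle it with \fref{hw} applied to an auxiliary third index together with fullness of the corresponding idempotent and \fref{add1}. The paper is slightly more economical here, setting $r=0$ in \fref{hw} to realize $z_{ij}(-ap,q)=F_\alpha\bigl(z_{\infty k}(a+qa,\,p-pq)\,x_{\infty k}(-a-qa)\bigr)$ for arbitrary $q$ in one shot (decomposing the $A$-argument through $e_k$), whereas you first extract the $x_{lm}$'s and then iterate the shift $r\mapsto r-pq$ to build up the $R$-argument, but both computations check out.
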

\begin{proof}
Let \(\alpha = \mathrm e_l - \mathrm e_m\) and \(e_\infty = e_l + e_m\). We have to show that every generator \(z_{ij}(a, p)\) of \(\stabs(R, A; \Phi)\) lies in the image of \(F_\alpha\). If \(i, j, l, m\) are distinct, then \(z_{ij}(a, p) = F_\alpha\bigl(z_{ij}(a, p)\bigr)\). If \(i \in \{l, m\}\) and \(j \notin \{l, m\}\), then \(z_{ij}(a, p) = F_\alpha\bigl(z_{\infty j}(a, p)\bigr)\). The case \(i \notin \{l, m\}\) and \(j \in \{l, m\}\) follows by transposition.

If \(\{i, j\} = \{l, m\}\), then choose an index \(k\) different from \(i\) and \(j\). We have
\[z_{ij}(-ap, q) = F_\alpha\bigl(z_{\infty k}(a + qa, p - pq)\, x_{\infty k}(-a - qa)\bigr)\]
by \fref{hw} for \(a \in e_i A e_k\) and \(p \in e_k R e_j\). It follows that \(z_{ij}(a, p)\) lies in the image if \(a \in e_i A e_k R e_j\). But the idempotent \(e_k\) is full, hence \(e_i A e_k R e_j = e_i A e_j\).
\end{proof}

\section{Root elimination: bijectivity}

\begin{lemma} \label{elim-bij}
Let \(\alpha = \mathrm e_l - \mathrm e_m \in \Phi\) be a root and suppose that \(n \geq 4\). Then \(F_\alpha \colon \stabs(R, A; \Phi / \alpha) \to \stabs(R, A; \Phi)\) is an isomorphism. Also, \(F_\alpha\bigl(\up{\stmap(x_{ml}(p))}{F_\alpha^{-1}(x_{lm}(a))}\bigr) = z_{lm}(a, p)\) in \(\stabs(R, A; \Phi)\).
\end{lemma}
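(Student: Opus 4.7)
Since $F_\alpha$ is surjective by lemma \ref{elim-sur}, it suffices to exhibit a homomorphism $G_\alpha \colon \stabs(R, A; \Phi) \to \stabs(R, A; \Phi/\alpha)$ with $G_\alpha \circ F_\alpha = \mathrm{id}$; then $G_\alpha = F_\alpha^{-1}$. If the construction is arranged so that by definition $G_\alpha(z_{lm}(a, p)) = \up{1+p}{G_\alpha(x_{lm}(a))}$, the final formula of the statement is automatic, since $\stmap(x_{ml}(p)) = 1 + p$ lies in $\diag(R, \Phi/\alpha)$ and applying $F_\alpha$ to both sides gives exactly $z_{lm}(a, p)$.

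I would build $G_\alpha$ on $\stfree(R, A; \Phi)$ first and then verify that the defining relations \fref{add1}, \fref{dis}, \fref{conj2}, \fref{conj2-t}, \fref{hw}, and \fref{rel4} of $\stabs(R, A; \Phi)$ are sent to identities. For a generator $z_{ij}(a, p)$ with at most one of $i, j$ in $\{l, m\}$, the formulas of lemma \ref{elim-map} force the definition; for example $G_\alpha(z_{lj}(a, p)) := z_{\infty j}(a, p)$ for $j \notin \{l, m\}$, viewing $a \in e_l A e_j \subseteq e_\infty A e_j$ and $p \in e_j R e_l \subseteq e_j R e_\infty$, and analogously in the other mixed cases and under transposition. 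For the diagonal case $\{i, j\} = \{l, m\}$, pick an auxiliary index $k \notin \{l, m\}$; since $n \geq 3$ one exists, and $R = R e_k R$ supplies a decomposition $a = -\sum_\nu c_\nu q_\nu$ with $c_\nu \in e_l A e_k$ and $q_\nu \in e_k R e_m$. Set
\[G_\alpha(x_{lm}(a)) \;:=\; \prod_\nu \bigl(z_{\infty k}(c_\nu, q_\nu)\, x_{\infty k}(-c_\nu)\bigr), \qquad G_\alpha(z_{lm}(a, p)) \;:=\; \up{1+p}{G_\alpha(x_{lm}(a))},\]
modelled on the surjectivity identity in the proof of lemma \ref{elim-sur}; the case $\{i, j\} = \{m, l\}$ is symmetric.

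The main obstacle is showing that the diagonal assignment is well-defined, i.e.\ independent of the chosen decomposition of $a$ and of the auxiliary index $k$, and that all six relations of $\stabs(R, A; \Phi)$ descend through $G_\alpha$. Well-definedness reduces to an application of lemma \ref{uni-conj} inside $\stabs(R, A; \Phi/\alpha)$: any two decompositions of $a$ yield candidates with the same $F_\alpha$-image, and their equality in the target can be extracted using the universal conjugation together with the already-verified relations \fref{add1} and \fref{dis}; independence of $k$ is handled by comparing two choices inside a common special closed subset, which requires an extra index available only when $n \geq 4$. Verification of the relations themselves splits by how the indices meet $\{l, m\}$: cases disjoint from $\{l, m\}$ are inherited, mixed cases reduce to the formulas of lemma \ref{elim-map}, and the genuinely diagonal instances of \fref{conj2}, \fref{conj2-t}, and \fref{hw} are handled by the auxiliary-index technique together with lemma \ref{z-conj}, again using $n \geq 4$. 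Once $G_\alpha$ is established, $G_\alpha \circ F_\alpha = \mathrm{id}$ follows by checking on generators of $\stabs(R, A; \Phi/\alpha)$ via the explicit formulas of lemma \ref{elim-map}.
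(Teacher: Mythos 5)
Your overall architecture coincides with the paper's: build a left inverse $G_\alpha$ on $\stfree(R, A; \Phi)$, check it kills the defining relations, and note that surjectivity (lemma \ref{elim-sur}) plus the existence of a left inverse gives bijectivity, with the final formula coming from $G_\alpha(z_{lm}(a,p)) = \up{1+p}{G_\alpha(x_{lm}(a))}$. The mixed-index assignments and the diagonal formula modelled on the surjectivity identity are also essentially the paper's $\widetilde x^k_{ij}(a,p) = z_{\infty k}(-a,p)\,x_{\infty k}(a)$.

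However, there is a genuine gap at the step you identify as ``the main obstacle.'' You propose to get well-definedness of the diagonal assignment from the observation that two decompositions of $a$ ``yield candidates with the same $F_\alpha$-image,'' extracting their equality in $\stabs(R, A; \Phi/\alpha)$ via lemma \ref{uni-conj}. This is circular: equality of $F_\alpha$-images only implies equality in the source if $F_\alpha$ is injective, which is precisely what the lemma asserts; and lemma \ref{uni-conj} gives uniqueness of certain conjugation homomorphisms, not equality of two products $\prod_\nu z_{\infty k}(c_\nu, q_\nu)\, x_{\infty k}(-c_\nu)$ attached to different decompositions of the same $a$. The actual mechanism is a balanced-map argument: one first shows (using lemma \ref{z-conj} and surjectivity of $F_{\mathrm e_\infty - \mathrm e_k}$, which needs $n \geq 4$) that the candidates act on $\stabs(R,A;\Phi/\alpha)$ through $\diag(R, \Phi/\alpha)$ and hence commute appropriately, then uses \fref{hw} to prove $\widetilde x^k_{ij}(a, r + pq) = \widetilde x^o_{ij}(ap, q)\,\widetilde x^k_{ij}(a, r)$, deduces additivity in the second argument from fullness of $e_o$, and finally concludes that $(a,p) \mapsto \widetilde x^k_{ij}(a,p)$ is biadditive and $e_k R e_k$-balanced, so it factors through the isomorphism $e_i A e_k \otimes_{e_k R e_k} e_k R e_j \cong e_i A e_j$. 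Without this computation your $G_\alpha$ is not known to be a well-defined map on generators. A secondary (smaller) shortfall: the verification that $G_\alpha$ kills \fref{conj2}, \fref{sym}, and \fref{hw} in the cases where two indices lie in $\{l,m\}$ is the bulk of the paper's proof (the identities (G1)--(G4)) and cannot be dismissed as ``handled by the auxiliary-index technique''; in particular the case $\{i,k\} = \{l,m\}$ of \fref{hw} requires a genuine calculation routed through $F_{\Psi/\alpha}$ for a rank-two subsystem $\Psi$.
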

\begin{proof}
Let \(e_\infty = e_l + e_m\). We have to find the preimages \(\widetilde z_{ij}(a, p) \in \stabs(R, A; \Phi / \alpha)\) of \(z_{ij}(a, p)\) and to prove the defining identities for them. Note that the elements \(x_{lm}(p)\) and \(x_{ml}(q)\) for \(p, q \in R\) act on \(\stabs(R, A; \Phi / \alpha)\) via their images in \(\diag(R; \Phi / \alpha)\). Let
\begin{align*}
\widetilde z_{ij}(a, p) &= z_{ij}(a, p) \text{ if } i, j, l, m \text{ are distinct}, a \in e_i A e_j, p \in e_j R e_i;\\
\widetilde z_{ij}(a, p) &= z_{\infty j}(a, p) \text{ if } i \in \{l, m\}, j \notin \{l, m\}, a \in e_i A e_j, p \in e_j R e_i;\\
\widetilde z_{ij}(a, p) &= z_{i \infty}(a, p) \text{ if } i \notin \{l, m\}, j \in \{l, m\}, a \in e_i A e_j, p \in e_j R e_i;\\
\widetilde x_{ij}^k(a, p) &= z_{\infty k}(-a, p)\, x_{\infty k}(a) \text{ if } \{i, j\} = \{l, m\}, k \notin \{l, m\}, a \in e_i A e_k, p \in e_k R e_j;\\
\up k{\widetilde x_{ij}}(q, a) &= z_{k \infty}(a, q)\, x_{k \infty}(-a) \text{ if } \{i, j\} = \{l, m\}, k \notin \{l, m\}, q \in e_i R e_k, a \in e_k A e_j.
\end{align*}
The idea is to find elements \(\widetilde x_{ij}(a)\) for \(\{i, j\} = \{l, m\}\) such that \(\widetilde x_{ij}^k(a, p) = \widetilde x_{ij}(ap)\) and \(\up k{\widetilde x_{ij}}(q, a) = \widetilde x_{ij}(qa)\), and then define \(\widetilde z_{ij}(a, r) = \up{\stmap(x_{ji}(r))}{\widetilde x_{ij}(a)}\). All calculations below are in \(\stabs(R, A; \Phi / \alpha)\).

Let \(\{i, j\} = \{l, m\}\). It is easy to see that \(\widetilde x^k_{ij}(a, p)\) acts on \(\stabs(R, A; \Phi / \alpha)\) by conjugation in the same way as its image in \(\diag(R, \Phi / \alpha)\). Indeed, these actions coincide on the image of \(F_\beta\) for \(\beta = \mathrm e_\infty - \mathrm e_k\) by lemma \ref{z-conj}, and \(F_\beta\) is surjective by lemma \ref{elim-sur}. In particular, \(\bigl[\widetilde x^k_{ij}(a, p), \widetilde x^o_{ij}(b, q)\bigr] = 1\) for all \(k, o \notin \{l, m\}\). Also,
\[\widetilde x^k_{ij}(a, p)\, \widetilde x^k_{ij}(b, p) = z_{\infty k}(-a, p)\, z_{\infty k}(-b, p)\, x_{\infty k}(b)\, x_{\infty k}(a) = \widetilde x^k_{ij}(a + b, p).\]
By the same argument, the factors in the definitions of \(\widetilde x^k_{ij}\) and \(\up k{\widetilde x_{ij}}\) commute.

If \(o \notin \{k, l, m\}\), \(a \in e_i R e_k\), \(p \in e_k R e_o\), \(q \in e_o R e_j\), \(r \in e_k R e_j\), then \(qa = ra = 0\). We apply \fref{hw} to get
\begin{align*}
\widetilde x^k_{ij}(a, r + pq)
&= z_{\infty k}(-a, r + pq)\, x_{\infty k}(a)\\
&= z_{\infty o}(-ap, q)\, z_{\infty k}(-a, r)\, x_{\infty o}(ap)\, x_{\infty k}(a)\\
&= z_{\infty o}(-ap, q)\, x_{\infty o}(ap)\, z_{\infty k}(-a, r)\, x_{\infty k}(a) = \widetilde x^o_{ij}(ap, q)\, \widetilde x^k_{ij}(a, r).
\end{align*}
In other words, \(\widetilde x^k_{ij}(a, pq) = \widetilde x^o_{ij}(ap, q)\) and \(\widetilde x^k_{ij}(a, r + pq) = \widetilde x^k_{ij}(a, r)\, \widetilde x^k_{ij}(a, pq)\) if \(p \in e_k R e_o\), \(q \in e_o R e_j\), and \(k \neq o\). Since the idempotent \(e_o\) is full, \(\widetilde x^k_{ij}(a, r + r') = \widetilde x^k_{ij}(a, r)\, \widetilde x^k_{ij}(a, r')\) for all \(r, r' \in e_k R e_j\). It follows that \(\widetilde x^k_{ij}(a, pq) = \widetilde x^k_{ij}(ap, q)\) for \(p \in e_k R e_k\) and \(q \in e_k R e_j\).

Since \(e_k\) is a full idempotent, the homomorphism \(e_i A e_k \otimes_{e_k R e_k} e_k R e_j \to e_i A e_j, a \otimes p \mapsto ap\) is bijective. Hence there is a unique homomorphism \(\widetilde x_{ij} \colon e_i A e_j \to \stabs(R, A; \Phi / \alpha)\) such that \(\widetilde x^k_{ij}(a, p) = \widetilde x_{ij}(ap)\). Clearly, \(\widetilde x^o_{ij}(a, p) = \widetilde x_{ij}(a, p)\) for all indices \(o \notin \{l, m\}\).

Let \(k, o\) be distinct indices not in \(\{i, j\} = \{l, m\}\), \(q \in e_i R e_k\), \(a \in e_k A e_o\), \(p \in e_o R e_j\), so \(pq = 0\). Applying \fref{hw} with \(r = 0\) we get
\[
\up k{\widetilde x_{ij}}(q, ap) = x_{k \infty}(-ap)\, \up{x_{ko}(a)}{z_{k \infty}(ap, q)} = z_{\infty o}(-qa, p)\, x_{\infty o}(qa) = \widetilde x^o_{ij}(qa, p).
\]
Since \(\up k{\widetilde x_{ij}}(q, a)\) is also additive on \(q\) and \(a\), we get \(\up k{\widetilde x_{ij}}(q, a) = \widetilde x_{ij}(qa)\). Let \(\widetilde z_{ij}(a, r) = \up{\stmap(x_{ji}(r))}{\widetilde x_{ij}(a)}\), where \(\stmap(x_{ji}(r))\) is the image of \(x_{ji}(r)\) in \(\diag(R, \Phi / \alpha)\).

So there is a homomorphism \(G_\alpha \colon \stfree(R, A; \Phi) \to \stabs(R, A; \Phi / \alpha), z_{ij}(a, p) \mapsto \widetilde z_{ij}(a, p)\). If \(\{i, j\} = \{l, m\}\), then the element \(G_\alpha\bigl(\widetilde z_{ij}(a, p)\bigr)\) acts on \(\stabs(R, A; \Phi / \alpha)\) in the same way as \((1 + p)(1 + d(a))(1 - p) \in \diag(R, \Phi / \alpha)\). We have to check that \(G_\alpha\) factors through the defining identities of \(\stabs(R, A; \Phi)\). This is clear for \fref{add1}, \fref{dis}, \fref{rel4}, and the Steinberg relations. Since the definition of \(G_\alpha\) is invariant under transposition, it suffices to consider only the non-transposed identities.

The cases of the remaining identities where at most one index is in \(\{l, m\}\) are obvious, in the remaining ones two of the indices \(i, j, k\) coincide with \(l, m\). Let \(\diag_{ij}\) be the subgroup of \(\stfree(R, A; \Phi)\) generated by all \(z_{ij}(a, p)\) and \(z_{ji}(b, q)\), \(\Psi = \{\pm(\mathrm e_i - \mathrm e_j), \pm(\mathrm e_i - \mathrm e_k), \pm(\mathrm e_j - \mathrm e_k)\} \subseteq \Phi\) be a root subsystem, \(e_{\infty'} = e_i + e_j + e_k\) be the corresponding idempotent. Choose an index \(o \notin \{i, j, k\}\).

If \(\{j, k\} = \{l, m\}\), then
\[G_\alpha(z_{i \oplus j, k}(a, b; p, q)) = \up{1 + q}{\bigl(\up{x_{\infty i}(p)}{\{x_{i \infty}(a)\}}\, \widetilde x_{jk}(b)\, x_{\infty i}(-bp)\bigr)} = \up{1 + q}{G_\alpha(z_{[j]i, k}(b, a; p))}. \tag{G1} \label{g1}\]
Hence under this assumption and for \(g \in \diag_{ij}\), \(u = \stmap(g) \in \glin(R)\), \(r \in e_i R e_o\), \(a \in e_o R e_k\), \(p \in e_k R e_i\) we have
\begin{align*}
G_\alpha\bigl(\up g{z_{ik}(ra, p)}\bigr)
&= F_{\Psi / \alpha}\bigl(\up{u (1 + p)}{\bigl(\up{x_{\infty' o}(r)}{\{x_{o \infty'}(a)\}}\, x_{o \infty'}(-a)\bigr)}\bigr)\\
&= \up{1 + pu^{-1} e_j}{F_{\Psi / \alpha}\bigl(\up{x_{\infty' o}(ur + pu^{-1} e_i ur)}{\{x_{o \infty'}(a - apu^{-1} e_i)\}}\, x_{o \infty'}(apu^{-1} e_i - a)\bigr)}\\
&= \up{1 + pu^{-1} e_j}{\bigl(\up{x_{\infty i}(pu^{-1} e_i)\, x_{\infty o}(e_j ur)\, x_{io}(e_i ur)}{\{x_{o \infty}(a)\}}\, \up{x_{\infty i}(pu^{-1} e_i)}{\{x_{o \infty}(-a)\}}\bigr)}\\
&= \up{1 + pu^{-1} e_j}{\bigl(z_{i \infty}(e_i ura, pu^{-1} e_i)\, \widetilde x_{jk}(e_j ura)\, x_{\infty i}(-e_j urapu^{-1} e_i)\bigr)}\\
&= G_\alpha\bigl(z_{i \oplus j, k}(e_i ura, e_j ura; pu^{-1} e_i, pu^{-1} e_j)\bigr). \label{g2} \tag{G2}
\end{align*}

Similarly, if \(g \in \diag_{ij}\), \(u = \stmap(g)\), \(r \in e_j R e_o\), \(b \in e_o A e_k\), \(q \in e_k R e_j\), then
\begin{align*}
G_\alpha\bigl(\up g{z_{jk}(rb, q)}\bigr)
&= F_{\Psi / \alpha}\bigl(\up{u(1 + q)}{\bigl(\up{x_{\infty' o}(r)}{\{x_{o \infty'}(b)\}}\, x_{o \infty'}(-b)\bigr)}\bigr)\\
&= \up{1 + qu^{-1} e_j}{F_{\Psi / \alpha}\bigl(\up{x_{\infty' o}(ur + qu^{-1} e_i ur)}{\{x_{o \infty'}(b - bqu^{-1} e_i)\}}\, x_{o \infty'}(bqu^{-1} e_i - b)\bigr)}\\
&= \up{1 + qu^{-1} e_j}{\bigl(\up{x_{\infty i}(qu^{-1} e_i)\, x_{\infty o}(e_j ur)\, x_{i o}(e_i ur)}{\{x_{o \infty}(b)\}}\, \up{x_{\infty i}(qu^{-1} e_i)}{\{x_{o \infty}(-b)\}}\bigr)}\\
&= \up{1 + qu^{-1} e_j}{\bigl(z_{i \infty}(e_i urb, qu^{-1} e_i)\, \widetilde x_{jk}(e_j urb)\, x_{\infty i}(-e_j urbqu^{-1} e_i)\bigr)}\\
&= G_\alpha\bigl(z_{i \oplus j, k}(e_i urb, e_j urb; qu^{-1} e_i, qu^{-1} e_j)\bigr). \label{g3} \tag{G3}
\end{align*}
It follows that \(G_\alpha\) factors through \fref{conj1} and, consequently, through \fref{add2}.

Now we prove that \(G_\alpha\) factors through \fref{sym} in the form
\[\bigl[z_{ik}(a, p)\, x_{ij}(-aq)\, x_{kj}(-paq), z_{jk}(b, q)\, x_{ji}(-bp)\, x_{ki}(-qbp)\bigr] = 1.\]
If \(\{j, k\} = \{l, m\}\), then for \(r \in e_j A e_o\), \(b \in e_o A e_k\), \(q = 0\) we have
\begin{align*}
G_\alpha\bigl(\up{z_{ik}(a, p)}{\bigl(x_{jk}(rb)\, x_{ji}(-rbp)\bigr)}\bigr)
&= \up{\up{x_{\infty i}(p)}{\{x_{i \infty}(a)\}}}{\bigl(\up{x_{\infty o}(r)\, x_{\infty i}(p)}{\{x_{o \infty}(b)\}}\, \up{x_{\infty i}(p)}{\{x_{o \infty}(-b)\}}\bigr)}\\
&= \up{x_{\infty o}(r)\, x_{\infty i}(p)}{\{x_{o \infty}(b)\}}\, \up{x_{\infty i}(p)}{\{x_{o \infty}(-b)\}}\\
&= G_\alpha\bigl(x_{jk}(rb)\, x_{ji}(-rbp)\bigr),
\end{align*}
and the general case follows by using that \(G_\alpha\) factors through \fref{add2} and applying a conjugation by \(1 + q \in \glin(R)\). The case \(\{i, k\} = \{l, m\}\) follows by symmetry. If \(\{i, j\} = \{l, m\}\), then for \(r \in e_j R e_o\) and \(b \in e_o R e_k\) we have
\begin{align*}
G_\alpha\bigl(z_{jk}(rb, q)\, x_{ji}(-rbp)\, x_{ki}(-qrbp)\bigr)
&= \up{x_{k \infty}(q)\, x_{\infty o}(r)}{\{x_{ok}(b)\}}\, \widetilde x_{ji}(-rbp)\, x_{k \infty}(-qrbp)\, x_{ok}(-b)\, x_{o \infty}(bq)\\
&= \up{x_{k \infty}(p + q)\, x_{\infty o}(r)}{\{x_{ok}(b)\}}\, \up{x_{k \infty}(p + q)}{\{x_{ok}(-b)\}}\\
&= z_{\infty k}(rb, p + q),
\end{align*}
so
\[G_\alpha(z_{i \oplus j, k}(a, b; p, q)) = z_{\infty k}(a + b, p + q) \label{g4} \tag{G4}\]
and \(G_\alpha\) factors through the remaining case of \fref{sym}. Consequently, it factors through \fref{add3}.

Now it is easy to see that \(G_\alpha\) factors through \fref{conj2'} in the form
\[G_\alpha\bigl(\up g{z_{i \oplus j, k}(a, b; p, q)}\bigr) = G_\alpha\bigl(z_{i \oplus j, k}(e_i u(a + b), e_j u(a + b); (p + q) u^{-1} e_i, (p + q) u^{-1} e_j)\bigr)\]
for \(g \in \diag_{ij}\) and \(u = \stmap(g)\). If \(\{i, j\} = \{l, m\}\), this follows from \fref{g4}, and the other two cases follows from \fref{g2} and \fref{g3} by using symmetry and that \(G_\alpha\) factors through \fref{add3}.

It remains to prove that \(G_\alpha\) factors through \fref{hw}. The cases with \(p = 0\) or \(q = 0\) formally follow from the other relations, and the cases \(\{i, j\} = \{l, m\}\), \(\{j, k\} = \{l, m\}\) follow from them by conjugation using \fref{g1} and \fref{g4}. Finally, if \(\{i, k\} = \{l, m\}\), \(s \in e_i R e_o\), \(a \in e_o R e_k\), then
\begin{align*}
G_\alpha\bigl(z_{i \oplus j, k}(sa, qsa; r - pq, p)\bigr)
&= \up{1 + r - pq}{\bigl(z_{j \infty}(qsa, p)\, \widetilde x_{ik}(sa)\, x_{\infty j}(-sap)\bigr)}\\
&= \up{1 + r - pq}{\bigl(\up{x_{\infty j}(p)\, x_{\infty o}(s)\, x_{jo}(qs)}{\{x_{o \infty}(a)\}}\, \up{x_{\infty j}(p)}{\{x_{o \infty}(-a)\}}\bigr)}\\
&= F_{\Psi / \alpha}\bigl(\up{1 + p + q + r}{\bigl(\up{x_{\infty' o}(s)}{\{x_{o \infty'}(a)\}}\, x_{o \infty'}(-a)\bigr)}\bigr)\\
&= \up{1 + r}{\bigl(\up{x_{j \infty}(q)\, x_{\infty j}(p)\, x_{\infty o}(s)}{\{x_{o \infty}(a)\}}\, \up{x_{j \infty}(q)\, x_{\infty j}(p)}{\{x_{o \infty}(-a)\}}\bigr)}\\
&= \up{1 + r}{\bigl(z_{\infty j}(-sap, q)\, \widetilde x_{ik}(sa)\, x_{j \infty}(qsa)\bigr)}\\
&= G_\alpha\bigl(z_{i, j \oplus k}(-sap, sa; q, r)\bigr).
\end{align*}

Hence \(G_\alpha \colon \stabs(R, A; \Phi) \to \stabs(R, A; \Phi / \alpha)\) is well-defined. But \(F_\alpha\) is surjective by lemma \ref{elim-sur} and obviously \(G_\alpha \circ F_\alpha\) is the identity, so \(F_\alpha\) is an isomorphism with the inverse \(G_\alpha\). The second claim follows from \(G_\alpha(z_{lm}(a, p)) = \up{1 + p}{G_\alpha(x_{lm}(a))}\).
\end{proof}

Now we are ready to prove our first main result.

\begin{theorem}\label{rel-lin}
Let \(R\) be a ring with a complete family of \(n\) full orthogonal idempotents, \(A\) be a non-unital \(R\)-algebra. Then the homomorphism \(\mu \colon \stabs'(R, A) \to \st'(R, A)\) is surjective for \(n \geq 3\) and bijective for \(n \geq 4\). If \(d \colon A \to R\) is a crossed module, then the induced homomorphism \(\stabs(R, A) \to \st(R, A)\) is also surjective or bijective.
\end{theorem}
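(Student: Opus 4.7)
Surjectivity of \(\mu\) for \(n \geq 3\) is immediate: the group \(\st'(R, A)\) is generated as an abstract group by the elementary conjugates \(z_{ij}(a, p)\) (as recalled in the introduction), each of which lies in the image of \(\mu\). The same reasoning gives surjectivity of \(\stabs(R, A) \to \st(R, A)\).

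For bijectivity when \(n \geq 4\), my plan is to construct the inverse of \(\mu\) by producing an action of \(\st(R)\) on \(\stabs'(R, A; \Phi)\). Once such an action is built, the defining presentation of \(\st'(R, A)\) via \fref{rel1}--\fref{rel2-t} will yield a homomorphism \(\st'(R, A) \to \stabs'(R, A)\) inverting \(\mu\) on generators. To define the action of a single generator \(x_{ij}(p) \in \st(R)\), I would take \(\alpha = \mathrm{e}_j - \mathrm{e}_i\) and form the automorphism \(\sigma_{ij}(p) = F_\alpha \circ \Ad(1 + p) \circ F_\alpha^{-1}\); here \(1 + p = \stmap(x_{ij}(p))\) lies in \(e_\infty R e_\infty\) with \(e_\infty = e_i + e_j\) being the new idempotent for \(\Phi/\alpha\), hence in \(\diag(R, \Phi/\alpha)\), and acts by the standard diagonal conjugation on \(\stabs'(R, A; \Phi/\alpha)\), while \(F_\alpha\) is the isomorphism from Lemma \ref{elim-bij}. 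The second claim of that lemma is precisely the equality \(\sigma_{ij}(p)(x_{ji}(a)) = z_{ji}(a, p)\); by unwinding the case analysis for \(F_\alpha^{-1}(x_{kl}(a))\) worked out in the proof of Lemma \ref{elim-bij}, one verifies that \(\sigma_{ij}(p)\) acts on every generator of \(\stabs'(R, A; \Phi)\) in accordance with \fref{rel1}, \fref{rel2}, and \fref{rel2-t}.

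The central task is to show the \(\sigma_{ij}(p)\) satisfy the Steinberg relations \fref{st1}--\fref{st3}, so that \(x_{ij}(p) \mapsto \sigma_{ij}(p)\) extends to a homomorphism \(\st(R) \to \Aut(\stabs'(R, A; \Phi))\). Relation \fref{st1} is immediate from \((1+p)(1+q) = 1 + p + q\) in \(e_\infty R e_\infty\) (since \(pq = 0\) for \(p, q \in e_i R e_j\)). For the commutator relations, the idea is to verify them by evaluating both sides on each class of generator \(x_{kl}(a)\) of \(\stabs'(R, A; \Phi)\): the images unwind via \(F_\alpha^{-1}\) and \(F_{\alpha'}^{-1}\) into expressions in \(\stabs'(R, A; \Phi/\alpha)\) on which the ordinary diagonal conjugation respects Steinberg identities, while Lemma \ref{z-conj} supplies the crucial intertwining property to reconcile different choices of \(\alpha\). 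I expect the main technical obstacle to be \fref{st2}, the identity \([\sigma_{ij}(p), \sigma_{jk}(q)] = \sigma_{ik}(pq)\), because it mixes three canonical root eliminations simultaneously and the auxiliary idempotent strategy of \(n \geq 4\) is tight; careful case analysis along the lines already used in the proof of Lemma \ref{elim-bij} (especially \fref{g2}, \fref{g3}) should reduce each case to a bounded calculation.

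Once the action is assembled, the universal property of \(\st'(R, A)\) produces the desired inverse \(\nu \colon \st'(R, A) \to \stabs'(R, A)\) of \(\mu\). In the relative case with a crossed module \(d \colon A \to R\), the relation \fref{rel4} defining \(\stabs(R, A)\) corresponds under \(\mu\) and \(\nu\) to \fref{rel3} defining \(\st(R, A)\), so the inverse descends to give the inverse of \(\stabs(R, A) \to \st(R, A)\).
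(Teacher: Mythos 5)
Your proposal matches the paper's proof: the paper likewise builds the \(\st(R)\)-action by transporting diagonal conjugation through the isomorphisms \(F_\alpha\) of Lemma \ref{elim-bij}, and then uses the section \(\nu \colon \up g{x_{ij}(a)} \mapsto \up g{x_{ij}(a)}\) together with the second claim of that lemma to see that \(\nu \circ \mu = \mathrm{id}\). The step you single out as the main obstacle --- the Steinberg relations for the \(\sigma_{ij}(p)\), in particular \fref{st2} --- is dispatched in the paper in two lines: the roots occurring in any single Steinberg relation span a root subsystem \(\Psi\) of rank at most \(2\), any word in the corresponding generators acts on the image of \(F_\Psi\) only through its image in \(\diag(R, \Phi/\Psi) \subseteq R^*\) (where the relation is a trivial ring identity, using the compatibility of eliminations from Lemma \ref{elim-map}), and \(F_\Psi\) is surjective by Lemma \ref{elim-sur} precisely because \(n \geq 4\).
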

\begin{proof}
By lemma \ref{relative}, it suffices to consider the case of a crossed module. Suppose that \(n \geq 3\) and take \(g \in \stabs(R, A)\). Surjectivity means that \(\up{x_{ij}(p)}{\mu(g)} \in \st(R, A)\) lies in the image of \(\mu\) for all generators \(x_{ij}(p) \in \st(R)\). But this follows from lemma \ref{elim-sur} applied to the root \(\mathrm e_i - \mathrm e_j\).

Now suppose that \(n \geq 4\). We construct an action of \(\st(R)\) on \(\stabs(R, A) = \stabs(R, A; \Phi)\) as follows. For a root \(\alpha = \mathrm e_i - \mathrm e_j\) and \(p \in e_i R e_j\) let
\[\up{x_{ij}(p)}{F_\alpha(g)} = F_\alpha\bigl(\up{1 + p}g\bigr),\]
this gives an automorphism of \(\stabs(R, A)\) by lemma \ref{elim-bij}. Hence we have a homomorphism \(\Ad \colon \stfree(R) \to \Aut(\stabs(R, A))\), where \(\stfree(R)\) is the free group with generators \(x_{ij}(p)\).

If \(\Psi \subseteq \Phi\) is a root subsystem of rank at most \(2\), then any element from \(\langle x_\alpha(R_\alpha) \mid \alpha \in \Psi \rangle \leq \st(R)\) acts on the image of \(F_\Psi\) independently on its decomposition into a product of generators with roots in \(\Psi\), also \(F_\Psi \colon \stabs(R, A; \Phi / \Psi) \to \stabs(R, A; \Phi)\) is surjective by lemma \ref{elim-sur}. Hence \(\Ad\) is well-defined on \(\st(R)\).

Obviously, \(\mu \colon \stabs(R, A) \to \st(R, A)\) is \(\st(R)\)-equivariant. Also, there is a homomorphism
\[\nu \colon \st(R, A) \to \stabs(R, A), \up g{x_{ij}(a)} \mapsto \up g{x_{ij}(a)}.\]
By the second claim of lemma \ref{elim-bij}, \(\nu \circ \mu\) is the identity. Hence \(\mu\) is an isomorphism.
\end{proof}

\section{Relative simply laced Steinberg groups}

In this section \(K\) is a commutative unital ring and \(\mathfrak a\) is a non-unital commutative \(K\)-algebra (satisfying the identity \(ap = pa\) for \(a \in \mathfrak a\) and \(p \in K\)). Let \(\Phi\) be a crystallographic reduced irreducible simply laced root system, i.e. either \(\mathsf A_\ell\) for \(\ell \geq 1\), or \(\mathsf D_\ell\) for \(\ell \geq 4\), or \(\mathsf E_\ell\) for \(\ell \in \{6, 7, 8\}\). A corresponding Steinberg group \(\st(\Phi, K)\) is the abstract group with generators \(x_\alpha(p)\) for \(\alpha \in \Phi\), \(p \in K\) and the relations
\begin{align*}
x_\alpha(p)\, x_\alpha(q) &= x_\alpha(p + q); \tag{St1}\\
[x_\alpha(p), x_\beta(q)] &= x_{\alpha + \beta}(N_{\alpha \beta}\, pq) \text{ for } \alpha + \beta \in \Phi; \tag{St2}\\
[x_\alpha(p), x_\beta(q)] &= 1 \text{ for } \alpha + \beta \notin \Phi \cup \{0\}. \tag{St3}
\end{align*}
Here \(N_{\alpha \beta} \in \{-1, 1\}\) are the so-called structure constants, they are determined by the corresponding Chevalley group scheme over \(\mathrm{Spec}(\mathbb Z)\) up to a choice of parametrizations of the root subgroups. The unrelativized Steinberg group \(\st(\Phi, \mathfrak a)\) is defined in the same way, but with the parameters in \(\mathfrak a\).

We define a group \(\st'(\Phi; K, \mathfrak a)\) as the group with an action of \(\st(\Phi, K)\) generated by the elements \(x_\alpha(a)\) for \(\alpha \in \Phi\), \(a \in \mathfrak a\) satisfying the Steinberg relations and
\begin{align*}
\up{x_\alpha(p)}{x_\beta(a)} &= x_\beta(a) \text{ for } \alpha + \beta \notin \Phi \cup \{0\}; \tag{Rel1}\\
\up{x_\alpha(p)}{x_\beta(a)} &= x_\beta(a)\, x_{\alpha + \beta}(N_{\alpha \beta}\, ap) \text{ for } \alpha + \beta \in \Phi. \tag{Rel2}
\end{align*}

If \(d \colon \mathfrak a \to K\) is actually a crossed module, then the relative Steinberg group \(\st(\Phi; K, \mathfrak a)\) is the factor-group of \(\st'(\Phi; K, \mathfrak a)\) by
\[\up{x_\alpha(d(a))}{g} = x_\alpha(a)\, g\, x_\alpha(-a) \text{ for any } g. \tag{Rel3}\]

As in the linear case, there is an isomorphism \(\st'(\Phi; K, \mathfrak a) \cong \st(\Phi; \mathfrak a \rtimes K, \mathfrak a)\) and a decomposition \(\st(\Phi; \mathfrak a \rtimes K) \cong \st'(\Phi; K, \mathfrak a) \rtimes \st(\Phi, K)\). If \(d \colon \mathfrak a \to K\) is a crossed module, then \(\st(\Phi; K, \mathfrak a) \to \st(\Phi, K), x_\alpha(a) \mapsto x_\alpha(d(a))\) is a group-theoretical crossed module and the sequence
\[\st(\Phi; K, \mathfrak a) \to \st(\Phi, K) \to \st(\Phi, K / d(\mathfrak a)) \to 1\]
is exact. Again as in the linear case, we use the elements
\begin{align*}
z_\alpha(a, p) &= \up{x_{-\alpha}(p)}{x_\alpha(a)};\\
z_{\alpha[\beta]}(a, b; p) &= z_\alpha(a, p)\, x_\beta(b)\, x_{\beta - \alpha}(N_{-\alpha, \beta}\, bp)\tag{Z2}\\
&= \up{x_{-\alpha}(p)}{\bigl(x_\alpha(a)\, x_\beta(b)\bigr)} \text{ for } \alpha - \beta \in \Phi;\\
z_{\alpha \oplus \beta}(a, b; p, q) &= z_{\alpha[\alpha - \beta]}(a, N_{-\beta, \alpha}\, aq; p)\, z_{\beta[\beta - \alpha]}(b, N_{-\alpha, \beta}\, bp; q)\tag{Z4}\\
&= \up{x_{-\alpha}(p)\, x_{-\beta}(q)}{\bigl(x_\alpha(a)\, x_\beta(b)\bigr)} \text{ for } \alpha - \beta \in \Phi.
\end{align*}
in the group \(\st'(\Phi; K, \mathfrak a)\), where \(\alpha, \beta, \alpha - \beta \in \Phi\).

In the case \(\Phi = \mathsf A_\ell\) the ring \(\mat(\ell + 1, \mathfrak a)\) is naturally a non-unital \(\mat(\ell + 1, K)\)-algebra. The ring \(\mat(\ell + 1, K)\) has a canonical family of full orthogonal idempotents \(e_1, \ldots, e_{\ell + 1}\). There are canonical isomorphisms
\begin{align*}
\st(\mathsf A_\ell, K) \to \st\bigl(\mat(l + 1, K)\bigr), {}&x_{\mathrm e_i - \mathrm e_j}(p) \mapsto x_{ij}(p e_{ij});\\
\st(\mathsf A_\ell, \mathfrak a) \to \st\bigl(\mat(l + 1, \mathfrak a)\bigr), {}&x_{\mathrm e_i - \mathrm e_j}(a) \mapsto x_{ij}(a e_{ij});\\
\st'(\mathsf A_\ell; K, \mathfrak a) \to \st'\bigl(\mat(\ell + 1, K), \mat(\ell + 1, \mathfrak a)\bigr), {}&z_{\mathrm e_i - \mathrm e_j}(a, p) \mapsto z_{ij}(a e_{ij}, p e_{ji})
\end{align*}
If \(d \colon \mathfrak a \to K\) is a crossed module, then \(d \colon \mat(\ell + 1, \mathfrak a) \to \mat(\ell + 1, K)\) is also a crossed module, so there is a canonical isomorphism
\[\st(\mathsf A_\ell; K, \mathfrak a) \cong \st\bigl(\mat(\ell + 1, K), \mat(\ell + 1, \mathfrak a)\bigr).\]
Of course, \(\st\bigl(\mat(\ell + 1, K)\bigr) = \st\bigl(\mat(\ell + 1, K), \mathsf A_\ell\bigr)\) in our notation from section \ref{s-root-elim}.

\begin{lemma}\label{n-rel}
The structure constants satisfy the relations
\[
N_{\alpha \beta}
= -N_{\beta \alpha}
= -N_{-\alpha, -\beta}
= N_{-\beta, -\alpha}
= N_{\beta, -\alpha - \beta}
= N_{-\alpha - \beta, \alpha}
\]
if \(\alpha, \beta, \alpha + \beta \in \Phi\).
\end{lemma}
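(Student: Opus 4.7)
My plan is to reduce these equalities to standard properties of a Chevalley basis $\{e_\alpha, h_\alpha\}$ of the simple Lie algebra of type $\Phi$. Since each $N_{\alpha\beta} \in \{-1,+1\}$ is an integer intrinsic to the Chevalley group scheme over $\mathbb Z$ (independent of $K$), I may verify the identities in any convenient setting, and will work at the Lie algebra level, using $[e_\alpha, e_\beta] = N_{\alpha\beta}\, e_{\alpha+\beta}$ for $\alpha+\beta \in \Phi$ and $[e_\alpha, e_{-\alpha}] = h_\alpha$.

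The identity $N_{\alpha\beta} = -N_{\beta\alpha}$ is just antisymmetry of the bracket; group-theoretically, it follows from $[x_\alpha(1), x_\beta(1)]^{-1} = [x_\beta(1), x_\alpha(1)]$ combined with injectivity of the canonical map $\mathbb Z \to \st(\Phi, \mathbb Z),\ n \mapsto x_{\alpha+\beta}(n)$, in the universal Steinberg group. For $N_{\alpha\beta} = -N_{-\alpha,-\beta}$, I would invoke the Chevalley involution $\omega\colon e_\alpha \mapsto -e_{-\alpha}$, $h \mapsto -h$, which is a Lie algebra automorphism by construction of the Chevalley basis; applying $\omega$ to $[e_\alpha, e_\beta] = N_{\alpha\beta}\, e_{\alpha+\beta}$ yields $[e_{-\alpha}, e_{-\beta}] = -N_{\alpha\beta}\, e_{-\alpha-\beta}$, hence $N_{-\alpha,-\beta} = -N_{\alpha\beta}$. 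Combining the two then gives $N_{-\beta,-\alpha} = -N_{\beta\alpha} = N_{\alpha\beta}$.

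For the cyclic equalities $N_{\alpha\beta} = N_{\beta,-\alpha-\beta} = N_{-\alpha-\beta,\alpha}$, I would set $\gamma = -\alpha - \beta$ and apply the Jacobi identity to $(e_\alpha, e_\beta, e_\gamma)$. All three pairwise sums $\alpha+\beta = -\gamma$, $\beta+\gamma = -\alpha$, $\gamma+\alpha = -\beta$ are roots, so the Jacobi identity collapses to
\[
N_{\beta\gamma}\, h_\alpha + N_{\gamma\alpha}\, h_\beta + N_{\alpha\beta}\, h_\gamma = 0.
\]
Substituting $h_\gamma = -h_\alpha - h_\beta$ and using linear independence of $h_\alpha, h_\beta$ (since $\alpha, \beta$ are linearly independent in a reduced root system), I conclude $N_{\alpha\beta} = N_{\beta\gamma} = N_{\gamma\alpha}$, which yields both remaining equalities.

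The main point requiring care is conceptual rather than computational: one must identify the commutator constants in the Steinberg presentation (St2) with those of the underlying Chevalley basis. This identification is standard in the simply laced case (see, e.g., Steinberg's Yale lecture notes or Carter's \emph{Simple Groups of Lie Type}) and is exactly the content of the paper's remark that the $N_{\alpha\beta}$ are ``determined by the corresponding Chevalley group scheme over $\mathrm{Spec}(\mathbb Z)$''.
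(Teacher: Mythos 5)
Your proof is correct, but it takes a different route from the paper: the paper's entire proof of this lemma is a citation to the standard list of structure-constant identities (\cite[(14.2)--(14.6)]{ChevElem}), whereas you rederive those identities from scratch at the Lie-algebra level. Your derivation is essentially the classical argument behind the cited facts (cf. Carter, \emph{Simple Groups of Lie Type}, Prop.\ 4.2.2): antisymmetry of the bracket gives $N_{\alpha\beta} = -N_{\beta\alpha}$, the Chevalley involution gives $N_{-\alpha,-\beta} = -N_{\alpha\beta}$, and the Jacobi identity applied to a triple of roots summing to zero gives the cyclic equalities. All steps check out; two small points are worth flagging. First, your substitution $h_\gamma = -h_\alpha - h_\beta$ for $\gamma = -\alpha-\beta$ uses that coroots are additive here, which holds because $\Phi$ is simply laced (all roots have equal length); in a general root system the Jacobi identity instead yields $N_{\beta\gamma}/(\alpha,\alpha) = N_{\gamma\alpha}/(\beta,\beta) = N_{\alpha\beta}/(\gamma,\gamma)$, so your shortcut is legitimate only in the setting of this paper --- which is the setting at hand. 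Second, your group-theoretic aside for $N_{\alpha\beta} = -N_{\beta\alpha}$ needs injectivity of $p \mapsto x_{\alpha+\beta}(p)$ on the root subgroup, which one gets by mapping to the Chevalley group; but this aside is redundant given your Lie-algebra argument. What your approach buys is self-containedness; what the paper's citation buys is brevity and an implicit guarantee that the sign conventions match the ones fixed by the Chevalley group scheme, which is the only genuinely delicate point and which you correctly identify as the conceptual crux.
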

\begin{proof}
This is precisely \cite[(14.2)--(14.6)]{ChevElem} (recall that \(N_{\alpha \beta} \in \{-1, 1\}\) in our case).
\end{proof}

Let \(\mathcal G\) be the family of all root subsystems of \(\Phi\) of types \(\mathsf A_1\) and \(\mathsf A_3\) ordered by inclusion. For every \(\Psi \in \mathcal G\) there are natural \(\st(\Psi, K)\)-equivariant homomorphisms
\[
\st'(\Psi; K, \mathfrak a) \to \st'(\Phi; K, \mathfrak a), \quad
\st(\Psi; K, \mathfrak a) \to \st(\Phi; K, \mathfrak a)
\]
given by \(x_\alpha(a) \mapsto x_\alpha(a)\).

\begin{lemma}\label{colimit}
If the rank of \(\Phi\) is at least \(3\), then
\[\st'(\Phi; K, \mathfrak a) = \mathrm{colim}_{\Psi \in \mathcal G} \st'(\Psi; K, \mathfrak a), \quad
\st(\Phi; K, \mathfrak a) = \mathrm{colim}_{\Psi \in \mathcal G} \st(\Psi; K, \mathfrak a).\]
Here both colimits are taken in the category of groups.
\end{lemma}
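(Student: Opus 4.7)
The plan is to exhibit two-sided inverses to the canonical homomorphisms $\phi'_\Phi \colon \mathrm{colim}_{\Psi \in \mathcal G} \st'(\Psi; K, \mathfrak a) \to \st'(\Phi; K, \mathfrak a)$ and $\phi_\Phi \colon \mathrm{colim}_{\Psi \in \mathcal G} \st(\Psi; K, \mathfrak a) \to \st(\Phi; K, \mathfrak a)$ coming from the natural maps associated with the inclusions $\Psi \subseteq \Phi$ for $\Psi \in \mathcal G$. The key input is \cite[theorem 9]{CentralityE}, already highlighted in the introduction, and the proof is essentially a colimit-formalism repackaging of that result.

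For surjectivity, recall that by \cite[lemma 5]{CentralityE} the groups $\st'(\Phi; K, \mathfrak a)$ and $\st(\Phi; K, \mathfrak a)$ are generated as abstract groups by the elements $z_\alpha(a, p) = \up{x_{-\alpha}(p)}{x_\alpha(a)}$. Each such $z_\alpha(a, p)$ is the image of the corresponding element of $\st'(\{\alpha, -\alpha\}; K, \mathfrak a)$ or $\st(\{\alpha, -\alpha\}; K, \mathfrak a)$ under the structure maps of the colimit, and $\{\alpha, -\alpha\}$ is an $\mathsf A_1$-subsystem in $\mathcal G$; hence every generator lies in the image, and both $\phi'_\Phi$ and $\phi_\Phi$ are surjective.

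For injectivity I would first reduce the $\st'$-case to the $\st$-case via the isomorphism $\st'(\Phi; K, \mathfrak a) \cong \st(\Phi; \mathfrak a \rtimes K, \mathfrak a)$ noted earlier in this section, which is compatible with restriction to each $\Psi \in \mathcal G$ and therefore transports one colimit decomposition onto the other. It then suffices to show that every relation among the generators $z_\alpha(a, p)$ that holds in $\st(\Phi; K, \mathfrak a)$ is already a formal consequence of the defining relations of $\st(\Psi; K, \mathfrak a)$ for some $\Psi \in \mathcal G$ of type $\mathsf A_3$; any such relation then automatically holds in $\mathrm{colim}_{\Psi \in \mathcal G} \st(\Psi; K, \mathfrak a)$, yielding the desired inverse map. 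This reduction is exactly what \cite[theorem 9]{CentralityE} provides, and the hypothesis $\mathrm{rank}(\Phi) \geq 3$ is what guarantees that $\Phi$ actually contains $\mathsf A_3$-subsystems.

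The substantive technical content is absorbed into the cited theorem, so the main obstacle here is mild bookkeeping: verifying that the relevant generators of $\st'(\Phi; K, \mathfrak a)$ and $\st(\Phi; K, \mathfrak a)$ are visible in the $\mathsf A_1$-subsystems, and that each defining relation among them is visible in some $\mathsf A_3$-subsystem. Both statements follow directly from the two cited results of \cite{CentralityE}.
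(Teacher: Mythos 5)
Your overall strategy (surjectivity from the generators \(z_\alpha(a,p)\), injectivity from \cite[theorem 9]{CentralityE}) is the same as the paper's, but there is a gap in how you invoke that theorem. As the paper uses it, theorem 9 of \cite{CentralityE} establishes the colimit decomposition of \(\st(\Phi; K, \mathfrak a)\) only when \(\mathfrak a\) is an \emph{ideal} of \(K\). A general crossed module \(d \colon \mathfrak a \to K\) need not be injective (think of \(d = 0\) or a homotope), so you cannot simply cite the theorem for \(\st(\Phi; K, \mathfrak a)\) in full generality, which is what your injectivity paragraph does. Your reduction also runs in the unhelpful direction: the isomorphism \(\st'(\Phi; K, \mathfrak a) \cong \st(\Phi; \mathfrak a \rtimes K, \mathfrak a)\) exhibits the \emph{first} equality as an instance of the ideal case (since \(\mathfrak a \leqt \mathfrak a \rtimes K\)), so the \(\st'\)-statement is the one that follows directly from theorem 9; it is the \(\st\)-statement for a general crossed module that still needs an argument, and reducing \(\st'\) to \(\st\) leaves you proving the harder half with a citation that does not cover it.

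The missing step is short but genuine. Since \(\st'(\Psi; K, \mathfrak a)\) is generated by the \(z_\alpha(a,p)\), the quotient \(\st(\Psi; K, \mathfrak a)\) of \(\st'(\Psi; K, \mathfrak a)\) is obtained by imposing \(\up{x_\alpha(d(a))}{z_\alpha(b,p)} = x_\alpha(a)\, z_\alpha(b,p)\, x_\alpha(-a)\) and \(z_\alpha(b, p + d(a)) = x_{-\alpha}(a)\, z_\alpha(b,p)\, x_{-\alpha}(-a)\) --- relations supported on a single root \(\alpha\), hence already holding in \(\st(\{-\alpha,\alpha\}; K, \mathfrak a)\), which is a member of \(\mathcal G\) (this is exactly why \(\mathcal G\) includes the \(\mathsf A_1\)-subsystems). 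Because forming a colimit commutes with imposing relations that live in the individual pieces, the colimit decomposition of \(\st'\) descends to \(\st\). Adding this observation closes the gap and recovers the paper's argument.
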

\begin{proof}
By \cite[theorem 9]{CentralityE} the second equality holds if \(\mathfrak a\) is an ideal of \(K\). Hence the first equality holds for arbitrary non-unital commutative \(K\)-algebra \(\mathfrak a\). To prove the second equality in full generality, note that \(\st'(\Psi; K, \mathfrak a)\) is generated by the elements \(z_\alpha(a, p)\) (for example, by \cite[lemma 4]{CentralityE}). Hence \(\st(\Psi; K, \mathfrak a)\) is the factor-group of \(\st'(\Psi; K, \mathfrak a)\) by the identities \(\up{x_\alpha(d(a))}{z_\alpha(b, p)} = x_\alpha(a)\, z_\alpha(b, p)\, x_\alpha(-a)\) and \(z_\alpha(b, p + d(a)) = x_{-\alpha}(a)\, z_\alpha(b, p)\, x_{-\alpha}(-a)\). Both of them already hold in \(\st(\{-\alpha, \alpha\}; K, \mathfrak a)\).
\end{proof}

In our second main result we actually do not need all the identities from the next lemma. But they are useful to prove that various homomorphisms from the relative Steinberg group are well-defined as in the proof of lemma \ref{elim-bij}.

\begin{lemma}\label{z-rel-st}
The elements \(z_\alpha(a, p)\) satisfy the relations
\begin{align*}
z_\alpha(a + a', p) &= z_\alpha(a, p)\, z_\alpha(a', p); \tag{Add1}\\
z_{\alpha[\beta]}(a + a', b + b'; p) &= z_{\alpha[\beta]}(a, b; p)\, z_{\alpha[\beta]}(a', b'; p); \tag{Add2}\\
z_{\alpha \oplus \beta}(a + a', b + b'; p, q) &= z_{\alpha \oplus \beta}(a, b; p, q)\, z_{\alpha \oplus \beta}(a', b'; p, q); \tag{Add3}\\
\up{z_\alpha(c, r)}{\bigl(x_{\alpha + \beta}(a)\, x_\beta(b)\bigr)} &= x_{\alpha + \beta}(a + \eps bc - acr)\, x_\beta(b + bcr - \eps acr^2) \text{ for } \alpha + \beta \in \Phi \tag{Conj1}\\
\bigl[x_{-\beta}(\eps ap)\, x_\alpha(a), x_\beta(b)\, x_{-\alpha}(-\eps bp)\bigr] &= z_{\alpha + \beta}(ab, p) \text{ for } \alpha + \beta \in \Phi; \tag{Mult}\\
\bigl[z_\alpha(a, p), z_\beta(b, q)\bigr] &= 1 \text{ for } \alpha \perp \beta; \tag{Dis}\\
z_{\alpha \oplus \beta}(a, b; p, q) &= z_{\beta \oplus \alpha}(b, a; q, p); \tag{Sym}\\
\up{z_\alpha(c, r)}{z_{\alpha + \beta \oplus \beta}(a, b; p, q)} &=\\
\up{x_{-\alpha - \beta}(cpr + \eps cqr^2)\, x_{-\beta}(-\eps cp - cqr)}{z}_{\alpha + \beta \oplus \beta}&(a + \eps bc - acr, b + bcr - \eps acr^2; p, q)\\
&\text{ for } \alpha + \beta \in \Phi \tag{Conj2}\\
z_{\alpha + \beta \oplus \beta}(a, \eps aq; r - \eps pq, p) &= z_{\alpha \oplus \alpha + \beta}(-\eps ap, a; q, r) \text{ for } \alpha + \beta \in \Phi. \tag{HW}
\end{align*}
in \(\st'(R, A)\), where \(\eps = N_{\alpha \beta}\) is a structure constant. If \(d \colon \mathfrak a \to K\) is a crossed module, then they also satisfy
\begin{align*}
z_\alpha\bigl(a, p + d(b)\bigr) &= x_{-\alpha}(b)\, z_\alpha(a, p)\, x_{-\alpha}(-b); \tag{Rel4}\\
\up{z_\alpha(c, r)}{z_{\alpha + \beta \oplus \beta}(a, b; p, q)} &= z_{\alpha + \beta \oplus \beta}(a + \eps bc - acr, b + bcr - \eps acr^2; p + d(cpr + \eps cqr^2), q - d(\eps cp + cqr))\\
&\text{ for } \alpha + \beta \in \Phi. \tag{Conj2'}\\
\end{align*}
\end{lemma}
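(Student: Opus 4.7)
The plan is to prove each identity by the same pattern as in Lemma \ref{z-rel}, exploiting the fact that every identity in the statement involves roots lying in a single subsystem \(\Psi \subseteq \Phi\) of type \(\mathsf A_1\), \(\mathsf A_1 \times \mathsf A_1\), or \(\mathsf A_2\). Using the natural \(\st(\Psi, K)\)-equivariant homomorphism \(\st'(\Psi; K, \mathfrak a) \to \st'(\Phi; K, \mathfrak a)\) (and its relative analogue) introduced before Lemma \ref{colimit}, it suffices to verify each identity inside \(\st'(\Psi; K, \mathfrak a)\). For \(\Psi\) of type \(\mathsf A_2\) I would then transport the problem to the linear setting via the canonical isomorphism \(\st'(\mathsf A_2; K, \mathfrak a) \cong \st'(\mat(3, K), \mat(3, \mathfrak a))\) recorded just before Lemma \ref{n-rel}, where the already proven Lemma \ref{z-rel} directly supplies each required identity.

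First I would dispose of the easy relations. The identity \fref{add1} is a Steinberg relation; \fref{dis} reduces to pairwise commutativity of the four generators \(x_{\pm \alpha}, x_{\pm \beta}\), which follows from \(\alpha \perp \beta\) together with \fref{st3}; and \fref{rel4} is immediate from \fref{rel3} applied to \(g = z_\alpha(a, p)\) after unfolding \(z_\alpha(a, p + d(b)) = \up{x_{-\alpha}(p + d(b))}{x_\alpha(a)}\). Next, \fref{add2} and \fref{add3} follow from \fref{add1} and the bracket interpretations (Z2), (Z4), since conjugation is a group homomorphism. For \fref{sym} I would use that in an \(\mathsf A_2\) subsystem with \(\alpha - \beta \in \Phi\) one has \(-\alpha - \beta \notin \Phi\), so \(x_{-\alpha}(p)\) and \(x_{-\beta}(q)\) commute and both sides of \fref{sym} equal \(\up{x_{-\alpha}(p)\, x_{-\beta}(q)}{(x_\alpha(a)\, x_\beta(b))}\).

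For the remaining identities \fref{conj1}, \fref{mult}, \fref{conj2}, \fref{hw} I would rewrite each as a conjugate of a Steinberg-type equality, exactly as in the proof of Lemma \ref{z-rel}. For instance, \fref{mult} becomes the conjugate by \(x_{-\alpha-\beta}(p)\) of the Chevalley commutator formula \([x_\alpha(a), x_\beta(b)] = x_{\alpha + \beta}(N_{\alpha \beta}\, ab)\); \fref{conj2} asserts the equality of two conjugations of the same element under two factorizations of a product in \(\prod_{\gamma \in \Sigma} x_\gamma(\mathfrak a_\gamma)\) that differ by Steinberg relations in \(\st(\Psi, \mathfrak a \rtimes K)\); and \fref{hw} records the equality of two different factorizations of the element of \(\st(K)\) conjugating \(x_\alpha(a)\) on the two sides. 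Alternatively, after the \(\mathsf A_2\)-reduction, all four become instances of the corresponding items of Lemma \ref{z-rel} under the matrix identification. The crossed-module variant \fref{conj2'} then follows from \fref{conj2} by two applications of \fref{rel4}: the outer conjugating factors \(x_{-\alpha-\beta}(pcr + \eps cqr^2)\) and \(x_{-\beta}(-\eps cp - cqr)\) have arguments lying in \(\mathfrak a\) (recall \(c \in \mathfrak a\) and \(d\) is a crossed module), so \fref{rel4} absorbs them into the last two slots of \(z_{\alpha + \beta \oplus \beta}\) as \(d(pcr + \eps cqr^2)\) and \(-d(\eps cp + cqr)\).

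The main obstacle I anticipate is sign bookkeeping. One must verify that the signs \(N_{-\alpha, \beta}\), \(N_{-\beta, \alpha}\), \(N_{\beta, -\alpha-\beta}\) appearing implicitly in the unfoldings (Z2) and (Z4) combine to give precisely the single \(\eps = N_{\alpha \beta}\) appearing in each stated identity. This is mechanical given Lemma \ref{n-rel}, but requires tracking each occurrence carefully; a clean way is to fix a reference identification with the \(\mathsf A_2 \cong \mat(3, -)\) case once and for all and let the signs be determined by that choice, so that consistency in the general simply laced setting is forced by the uniqueness of the Chevalley commutator constants.
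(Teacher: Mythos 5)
Your proposal is correct and follows essentially the same route as the paper, whose entire proof is that the lemma ``directly follows from lemma \ref{z-rel}'': every identity lives in a subsystem of type \(\mathsf A_1\), \(\mathsf A_1 \times \mathsf A_1\), or \(\mathsf A_2\), and the \(\mathsf A_2\) case is the \(\mat(3,-)\) case already handled there. Your extra care with the structure constants via lemma \ref{n-rel} and the fixed \(\mathsf A_2 \cong \mat(3,-)\) identification is exactly the bookkeeping the paper leaves implicit.
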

\begin{proof}
This directly follows from lemma \ref{z-rel}.
\end{proof}

\begin{theorem}\label{rel-ade}
Let \(K\) be a commutative unital ring, \(\mathfrak a\) be a commutative non-unital \(K\)-algebra, \(\Phi\) be one of the root systems \(\mathsf A_\ell\), \(\mathsf D_\ell\), \(\mathsf E_\ell\) with \(\ell \geq 3\). As in the linear case, we use the notation \(x_\alpha(a) = z_\alpha(a, 0)\), \fref{z2}, and \fref{z4}. Then \(\st'(\Phi; K, \mathfrak a)\) as an abstract group is given by the generators \(z_\alpha(a, p)\) and the relations \fref{add1}, \fref{dis}, \fref{conj2}, \fref{hw}. If \(d \colon \mathfrak a \to K\) is a crossed module, then \(\st(\Phi; K, \mathfrak a)\) is the factor-group of \(\st'(\Phi; K, \mathfrak a)\) by \fref{rel4}. In this presentation of \(\st(\Phi; K, \mathfrak a)\) the axiom \fref{conj2} may be replaced by \fref{conj2'}.
\end{theorem}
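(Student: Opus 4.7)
The plan is to let $\widetilde G$ denote the group defined by the presentation in the statement (generators $z_\alpha(a,p)$ subject to \fref{add1}, \fref{dis}, \fref{conj2}, \fref{hw}), observe that Lemma~\ref{z-rel-st} gives a canonical surjection $\widetilde G \twoheadrightarrow \st'(\Phi; K, \mathfrak a)$ sending $z_\alpha(a,p)$ to $z_\alpha(a,p)$, and construct an inverse. By Lemma~\ref{colimit}, $\st'(\Phi; K, \mathfrak a) = \mathrm{colim}_{\Psi \in \mathcal G}\, \st'(\Psi; K, \mathfrak a)$ with $\mathcal G$ ranging over root subsystems of types $\mathsf A_1$ and $\mathsf A_3$, so it suffices to exhibit compatible homomorphisms $\psi_\Psi \colon \st'(\Psi; K, \mathfrak a) \to \widetilde G$ for each $\Psi \in \mathcal G$.

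For $\Psi = \{\pm \alpha\}$ of type $\mathsf A_1$, the only non-Steinberg defining relation is additivity of $x_\alpha$ and $z_\alpha$, which is precisely \fref{add1} (the Steinberg relations themselves follow from \fref{add1}, \fref{dis}, \fref{conj2} at $p = q = 0$ exactly as in Lemma~\ref{abs-rel}). For $\Psi$ of type $\mathsf A_3$, I would fix a Chevalley isomorphism $\Psi \cong \mathsf A_3$ to identify $\st'(\Psi; K, \mathfrak a)$ with $\st'(\mat(4, K), \mat(4, \mathfrak a))$ (via the paragraph preceding Lemma~\ref{n-rel}), so that by Theorem~\ref{rel-lin} applied with $n = 4$ it has presentation by the linear relations \fref{add1}, \fref{dis}, \fref{conj2}, \fref{conj2-t}, \fref{hw}. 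Sending each matrix generator $z_{ij}(a e_{ij}, p e_{ji})$ to $z_{\mathrm e_i - \mathrm e_j}(a, p) \in \widetilde G$, I must verify that the image of each defining relation holds in $\widetilde G$. For \fref{add1}, \fref{dis}, \fref{conj2}, \fref{hw} this is direct, since all roots involved lie in $\Psi \subseteq \Phi$ and the relation is among the defining identities of $\widetilde G$. The crucial point is \fref{conj2-t}: under the Chevalley identification it translates into an instance of \fref{conj2} for the $\mathsf A_2$ subsystem spanned by $\{-\alpha, -\beta\}$ instead of $\{\alpha, \beta\}$, which is legitimate because \fref{conj2} is imposed in $\widetilde G$ for every pair $\alpha, \beta \in \Phi$ with $\alpha + \beta \in \Phi$, including the opposite orientation, and $\Phi = -\Phi$.

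Compatibility of $\psi_\Psi$ under inclusions $\Psi \subseteq \Psi'$ is automatic because both maps act by $z_\alpha(a,p) \mapsto z_\alpha(a,p)$ on the overlap. The $\psi_\Psi$ thus assemble into a homomorphism $\psi \colon \st'(\Phi; K, \mathfrak a) \to \widetilde G$ inverse to the canonical surjection, settling the non-relative case. For a crossed module $d \colon \mathfrak a \to K$, quotienting both sides by \fref{rel4} gives the relative isomorphism; the replacement of \fref{conj2} by \fref{conj2'} in the relative presentation is a formal rewriting using \fref{rel4} to absorb the $d(\cdot)$-summands in the last two arguments, exactly as in the linear case discussed after Lemma~\ref{relative}.

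The principal obstacle is the $\mathsf A_3$ step: one must carefully track the structure constants $N_{\alpha\beta} \in \{\pm 1\}$ through the Chevalley identification $\Psi \cong \mathsf A_3$ (the symmetries of Lemma~\ref{n-rel} suffice for this), and verify that the single family of relations \fref{conj2} in $\widetilde G$, applied at both orientations of each $\mathsf A_2$ subsystem, really does supply both \fref{conj2} and \fref{conj2-t} of the matrix presentation after translation. This is elementary but requires care to ensure no extra relation is needed that does not already follow from the four listed.
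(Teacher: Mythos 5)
Your overall strategy---splitting \(\st'(\Phi; K, \mathfrak a)\) into the colimit of Lemma~\ref{colimit} and feeding the \(\mathsf A_3\) pieces into Theorem~\ref{rel-lin}---is the same as the paper's, and your handling of the \(\mathsf A_3\) step is sound, including the observation that \fref{conj2-t} becomes the instance of the simply laced \fref{conj2} with \(\beta\) replaced by \(-\alpha-\beta\). The gap is the compatibility step, which you dismiss as ``automatic'' and which is in fact the substance of the paper's proof. The overlap objects in the diagram \(\mathcal G\) are the rank-one groups \(\st'(\{-\alpha, \alpha\}; K, \mathfrak a)\), and these are \emph{not} presented by generators \(z_{\pm\alpha}(a,p)\) and \fref{add1}: in rank one the relations (Rel1) and (Rel2) are essentially vacuous (the only surviving instance says that \(x_\alpha(K)\) centralizes \(x_\alpha(\mathfrak a)\)), so \(\st'(\{-\alpha,\alpha\}; K, \mathfrak a)\) is generated as an abstract group by all conjugates \(\up{w}{x_{\pm\alpha}(a)}\) with \(w\) an arbitrary word in \(x_\alpha(K) * x_{-\alpha}(K)\) --- for instance \(\up{x_\alpha(p)}{z_\alpha(a,q)}\), which is not visibly a word in the \(z_{\pm\alpha}\)'s. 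Consequently, for two \(\mathsf A_3\) subsystems \(\Psi, \Psi'\) containing \(\alpha\) you must actually prove that such an element has the same image in \(\widetilde G\) whether routed through \(\st'(\Psi) \cong \stabs'(\Psi)\) or through \(\st'(\Psi')\); ``both maps send \(z_\alpha(a,p)\) to \(z_\alpha(a,p)\) on the overlap'' does not address the generators that are not \(z\)'s.

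The paper closes this gap in two moves that are missing from your argument. First, by lemma 2 of \cite{CentralityE} one may assume \(\Psi \cap \Psi'\) is of type \(\mathsf A_2\) (in general two \(\mathsf A_3\)'s through \(\alpha\) meet only in \(\{\pm\alpha\}\), and one connects them by a chain of subsystems with pairwise \(\mathsf A_2\) intersections). Second---and this is where the \(n = 3\) case of Theorem~\ref{rel-lin} is needed, not just \(n = 4\)---the \emph{surjectivity} of \(\stabs'(\Psi\cap\Psi'; K, \mathfrak a) \to \st'(\Psi\cap\Psi'; K, \mathfrak a)\) lets one rewrite the image of \(g \in \st'(\{-\alpha,\alpha\}; K, \mathfrak a)\) inside \(\st'(\Psi\cap\Psi'; K, \mathfrak a)\) as a word in the generators \(z_\beta(b,q)\) with \(\beta \in \Psi \cap \Psi'\); that single word has an unambiguous image in \(\widetilde G\), and compatibility of the natural maps \(\stabs'(\Psi\cap\Psi') \to \stabs'(\Psi) \to \widetilde G\) and \(\stabs'(\Psi\cap\Psi') \to \stabs'(\Psi') \to \widetilde G\) then gives the desired agreement. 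Without this argument your cocone \((\psi_\Psi)\) is not known to be a cocone, so the inverse map on the colimit is not yet constructed.
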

\begin{proof}
Let \(\stabs'(\Phi; K, \mathfrak a)\) be the abstract group from the statement with a canonical homomorphism
\[\stabs'(\Phi; K, \mathfrak a) \to \st'(\Phi; K, \mathfrak a)\]
from lemma \ref{z-rel-st}. It is bijective if \(\Phi = \mathsf A_\ell\) by theorem \ref{rel-lin}.

Let us show that \(\stabs'(\Phi; K, \mathfrak a) = \mathrm{colim}_{\Psi \in \mathcal G} \st'(\Psi; K, \mathfrak a)\), where \(\st'(\Psi; K, \mathfrak a) \cong \stabs'(\Psi; K, \mathfrak a)\) canonically maps to \(\stabs'(\Phi; K, \mathfrak a)\) if \(\Psi\) is of type \(\mathsf A_3\). It suffices to show that if \(\Psi, \Psi' \subseteq \Phi\) are of type \(\mathsf A_3\),  \(\alpha \in \Psi \cap \Psi'\), and \(g \in \st(\{-\alpha, \alpha\}; K, \mathfrak a)\), then the two images of \(g\) in \(\stabs'(\Phi; K, \mathfrak a)\) coincide. By \cite[lemma 2]{CentralityE} we may assume that \(\Psi \cap \Psi'\) is of type \(\mathsf A_2\). Then the image of \(g\) in \(\st'(\Psi \cap \Psi'; K, \mathfrak a)\) lies in the image of \(\stabs'(\Psi \cap \Psi'; K, \mathfrak a) \to \st'(\Psi \cap \Psi'; K, \mathfrak a)\) by theorem \ref{rel-lin}. This implies the claim.

Hence \(\stabs'(\Phi; K, \mathfrak a) \to \st'(\Phi; K, \mathfrak a)\) is an isomorphism. The proof for any of the two presentations of \(\st(\Phi; K, \mathfrak a)\) is the same.
\end{proof}

The theorem gives a slight strengthening of lemma \ref{z-rel-st}: all relations between the generators \(z_\alpha(a, p)\) actually comes from root subsystems of ranks \(1\) and \(2\) (i.e. \(\mathsf A_1\), \(\mathsf A_1 \times \mathsf A_1\), and \(\mathsf A_2\)).

\section{Factoring out transvections}

In this section we study the factor-groups of \(\st(R, A)\) and \(\st(\Phi; K, \mathfrak a)\) by the images of \(\st(A)\) and \(\st(\Phi; \mathfrak a)\).

In the linear case the group \(\st(A)\) usually cannot be a crossed module over \(\st(R)\) satisfying natural properties: its image in \(\glin(R)\) is a subgroup of
\[\{g \in \glin(R) \mid e_i g e_j \in A \text{ for } i \neq j, e_i g e_i \in A^2\},\]
so this image does not contain \(\up{x_{ji}(p)}{x_{ij}(a)}\) for generic \(p\) and \(a\). Similarly, the unrelativized Steinberg group is not invariant under root elimination, i.e. \(F_\Psi \colon \st(A, \Phi / \Psi) \to \st(A, \Phi)\) is not surjective in general even if the rank of \(\Phi / \Psi\) is large.

However, it turns out that at least the image of \(\st(A)\) in \(\st(R, A)\) is normal. In the following theorem we use \(\stabs(R, A)\) instead of \(\st(R, A)\) to cover the case \(n = 3\).

\begin{theorem}\label{f-trans-lin}
Let \(R\) be a ring with a complete family of \(n \geq 3\) full orthogonal idempotents, \(d \colon A \to R\) be a crossed module. Then the image of \(\st(A) \to \stabs(R, A), x_{ij}(a) \mapsto x_{ij}(a)\) is normal with abelian factor-group. This factor-group is the abelian group with generators \(\overline z_{ij}(a, p)\) (the images of \(z_{ij}(a, p)\)) and relations
\begin{align*}
\overline z_{ij}(a + b, p) &= \overline z_{ij}(a, p) + \overline z_{ij}(b, p); \tag{FT1} \label{ft1}\\
\overline z_{ij}(a, p + q) &= \overline z_{ij}(a, p) + \overline z_{ij}(a, q); \tag{FT2} \label{ft2}\\
\overline z_{ij}(ab, p) &= 0 \text{ for } a \in e_i A \text{ and } b \in A e_j; \tag{FT3} \label{ft3}\\
\overline z_{ij}(a, d(b)) &= 0; \tag{FT4} \label{ft4}\\
\overline z_{ij}(a, pq) &= \overline z_{ik}(ap, q) + \overline z_{kj}(qa, p) \text{ if } i, j, k \text{ are distinct}. \tag{FT5} \label{ft5}
\end{align*}
\end{theorem}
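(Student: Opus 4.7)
The plan is to prove the theorem in two parts: first, that $E$ is normal in $\stabs(R,A)$, and second, that $\stabs(R,A)/E$ coincides with the abelian group $G$ presented by \fref{ft1}--\fref{ft5}. For normality, I would check $\up{z_{ij}(c,r)}{x_{kl}(a)} \in E$ by case analysis on the overlap of $\{i,j\}$ and $\{k,l\}$: disjoint indices follow from \fref{dis}; exactly one shared index follows by specialising \fref{conj1} or \fref{conj1-t} with one argument set to zero, giving the conjugate as an explicit word in $E$. The parallel-root case $\{k,l\}=\{i,j\}$ is the most delicate; here I would combine \fref{mult} (which exhibits $z_{ij}(uv,r)$ as a commutator of $E$-elements when $u\in e_iAe_m$, $v\in e_mAe_j$), \fref{add1}, and \fref{rel4}, together with the fullness of an auxiliary idempotent $e_m$ ($m\neq i,j$, available since $n\geq 3$), to reduce the parallel-root conjugation to the previously handled cases.

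For the second part, I would build $\phi\colon\stabs(R,A)\to G$ by $z_{ij}(a,p)\mapsto \bar z_{ij}(a,p)$ and verify each defining relation of $\stabs(R,A)$ maps to a valid identity in $G$. \fref{add1} is exactly \fref{ft1}; \fref{dis} is trivial since $G$ is abelian; in \fref{conj2} and \fref{conj2-t} the outer conjugations act trivially in the abelian target, and the resulting sum identities reduce via \fref{ft1} to the vanishing of cross-terms of the form $\bar z_{ik}(cb,p)$, $\bar z_{ik}(cra,p)$, $\bar z_{jk}(rcb,q)$, $\bar z_{jk}(rcra,q)$, each zero by \fref{ft3} since their first arguments are products of two $A$-elements; \fref{hw} yields \fref{ft5} after simplification using \fref{ft2}; \fref{rel4} combined with \fref{ft2} gives \fref{ft4}. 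Since $\phi(x_{ij}(a))=\bar z_{ij}(a,0)=0$, the map factors through a surjection $\bar\phi\colon \stabs(R,A)/E\twoheadrightarrow G$.

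To see $\bar\phi$ is an isomorphism, I would verify \fref{ft1}--\fref{ft5} hold in $\stabs(R,A)/E$: \fref{ft1}, \fref{ft3}, \fref{ft4}, \fref{ft5} follow respectively from \fref{add1}, \fref{mult} (which places $z_{ij}(ab,p)$ in $[E,E]\subseteq E$), \fref{rel4} at $p=0$ (making $z_{ij}(a,d(b))$ a conjugate of $x_{ij}(a)$ by $x_{ji}(b)$), and \fref{hw}. The subtlest relation is \fref{ft2}: differencing \fref{hw} with parameters $(r,p,q)$ against $(r+s,p,q)$ and setting $r=pq$ yields the partial additivity $\bar z_{ik}(a,X+pq) = \bar z_{ik}(a,X) + \bar z_{ik}(a,pq)$ whenever one summand is a single product; iterating over a decomposition of an arbitrary $Y\in e_kRe_i$ as $\sum p_\alpha q_\alpha$ (possible by fullness of an auxiliary idempotent with index different from $i,k$, using $n\geq 3$) extends this to full additivity. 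The main obstacle will be the parallel-root case in the normality argument, which is the only conjugation not given by a direct specialisation of the identities already stated in lemma \ref{z-rel}.
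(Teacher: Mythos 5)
Your overall architecture (normality first, then a two\mbox{-}sided comparison between \(\stabs(R,A)/E\) and the abelian group presented by \fref{ft1}--\fref{ft5}) matches the paper's, and the second half --- \fref{ft3} from \fref{mult} plus fullness, \fref{ft4} from \fref{rel4}, \fref{ft5} from \fref{hw} at \(r=pq\), and \fref{ft2} from \fref{hw} by differencing and fullness of an auxiliary idempotent --- is essentially exactly what the paper does. The genuine gap is where you yourself flag it: the parallel-root conjugation \(\up{z_{ij}(c,r)}{x_{ij}(a')}\). The tools you list for it (\fref{mult}, \fref{add1}, \fref{rel4}, fullness) do not suffice, because none of them says anything about how \(z_{ij}(c,r)\) conjugates an element in its own root pair: \fref{mult} only covers first arguments in \(A\cdot A\) (and \(A\) need not be idempotent, so a general \(a'\in e_iAe_j\) is not of that form), \fref{add1} only lets \(z_{ij}(c,r)\) absorb \(z_{ij}(a',r)\) with the \emph{same} second argument \(r\), and \fref{rel4} only shifts second arguments by \(d(A)\). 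The missing ingredient is a specialization of \fref{conj2}: writing \(a'=-aq\) with \(a\in e_iAe_k\), \(q\in e_kRe_j\) (fullness of \(e_k\)), one has \(x_{ij}(-aq)\,x_{ik}(a)=\up{x_{kj}(q)}{x_{ik}(a)}=z_{i\oplus j,k}(a,0;0,q)\), and \fref{conj2} with \(p=0\) gives
\[\up{z_{ij}(c,r)}{\bigl(x_{ij}(-aq)\,x_{ik}(a)\bigr)}=\up{x_{ki}(qrcr)\,x_{kj}(-qrc)}{z_{[i]j,k}\bigl(a-cra,-rcra;q\bigr)},\]
whose right-hand side lies in \(E\): the conjugating factors have arguments in \(A\), the two \(x\)-factors of \(z_{[i]j,k}\) are generators of \(E\), and the remaining factor \(z_{jk}(-rcra,q)\) has first argument \((-rcr)\cdot a\in A\cdot A\), so \fref{mult} applies to it. Combining with \(\up{z_{ij}(c,r)}{x_{ik}(a)}\in E\) (known from \fref{conj1}) finishes the case. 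So \fref{mult} and fullness do enter, but only \emph{after} this \fref{conj2} step, which your proposal never supplies and which I do not see how to replace by the relations you name.

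A secondary omission: before you can read \fref{ft1}--\fref{ft5} additively in \(\stabs(R,A)/E\) and obtain an inverse to \(\bar\phi\), you must prove that this quotient is abelian --- this is part of the statement, not a given. The paper deduces commutativity of \(\overline z_{ij}(a,p)\) and \(\overline z_{kl}(b,q)\) for \(\{i,j\}\neq\{k,l\}\) from the images of \fref{conj2} and \fref{hw} in the quotient, and then settles the remaining case using \fref{ft2} and \fref{ft5}; your write-up assumes this silently.
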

\begin{proof}
Let \(N \leq \stabs(R, A)\) be the image of \(\st(A)\), i.e. the subgroup generated by all \(x_{ij}(a)\). First of all, we have to show that \(N\) is normal. Up to transposition this follows from \fref{dis}, \fref{conj1}, \fref{mult}, and the following identities (they are special cases of \fref{conj2}):
\begin{align*}
\up{z_{ij}(c, r)}{\bigl(x_{ij}(-aq)\, x_{ik}(a)\bigr)} &= \up{x_{ki}(qrcr)\, x_{kj}(-qrc)}{z_{[i]j, k}\bigl(a - cra, -rcra; q\bigr)};\\
\up{z_{ij}(c, r)}{\bigl(x_{ji}(-bp)\, x_{jk}(b)\bigr)} &= \up{x_{ki}(pcr)\, x_{kj}(-pc)}{z_{[j]i, k}\bigl(b + rcb, cb; p\bigr)}.
\end{align*}

We use the additive notation for \(\stabs(R, A) / N\). The identity \fref{ft1} is the same as \fref{add1}. By \fref{mult}, \(z_{ij}(ab, p) \in N\) for \(a \in e_i A e_k\) and \(b \in e_k A e_j\) if \(i, j, k\) are distinct. Since \(e_k\) is a full idempotent, \fref{ft3} follows. The identity \fref{ft4} follows from \fref{rel4}.

By \fref{ft1} and \fref{ft3}, the images of \fref{conj2} and \fref{hw} in \(\stabs(R, A) / N\) are
\begin{align*}
\overline z_{ij}(c, r) + \overline z_{ik}(a, p) + \overline z_{jk}(b, q) - \overline z_{ij}(-c, r) &= \overline z_{ik}(a, p) + \overline z_{jk}(b, q);\\
\overline z_{ik}(a, r - pq) + \overline z_{jk}(qa, p) &= \overline z_{ij}(-ap, q) + \overline z_{ik}(a, r).
\end{align*}
Hence any two generators \(\overline z_{ij}(a, p)\) and \(\overline z_{kl}(b, q)\) commute unless \(\{i, j\} = \{k, l\}\). Taking \(r = pq\) in the image of \fref{hw}, we get \fref{ft5}. Hence the image of \fref{hw} may be written as
\[\overline z_{ik}(a, r - pq) + \overline z_{ik}(a, pq) = \overline z_{ik}(a, r),\]
and \fref{ft2} follows. The commutativity of \(\stabs(R, A) / N\) follows from \fref{ft2}, \fref{ft5}, and the already proved cases of commutativity.

Conversely, it is easy to see that the images of \fref{add1}, \fref{dis}, \fref{conj2}, \fref{hw}, \fref{rel4} follow from \fref{ft1}--\fref{ft5}.
\end{proof}

\begin{theorem}\label{f-trans-chev}
Let \(K\) be a commutative unital ring, \(d \colon \mathfrak a \to K\) be a commutative crossed module, \(\Phi\) be any of the root systems \(\mathsf A_l\), \(\mathsf D_l\), \(\mathsf E_l\) with \(l \geq 3\). Then the image of \(\st(\Phi; \mathfrak a) \to \st(\Phi; K, \mathfrak a), x_\alpha(a) \mapsto x_\alpha(a)\) is normal with abelian factor-group. This factor-group is the abelian group with generators \(\overline z_\alpha(a, p)\) (the images of \(z_\alpha(a, p)\)) and relations
\begin{align*}
\overline z_\alpha(a + b, p) &= \overline z_\alpha(a, p) + \overline z_\alpha(b, p); \tag{FT1}\\
\overline z_\alpha(a, p + q) &= \overline z_\alpha(a, p) + \overline z_\alpha(a, q); \tag{FT2}\\
\overline z_\alpha(ab, p) &= 0 \text{ for } a, b \in \mathfrak a; \tag{FT3}\\
\overline z_\alpha(a, d(b)) &= 0; \tag{FT4}\\
\overline z_{\alpha + \beta}(a, pq) &= z_\alpha(ap, q) + z_\beta(qa, p) \text{ if } \alpha + \beta \in \Phi. \tag{FT5}
\end{align*}
\end{theorem}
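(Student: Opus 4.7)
The plan is to parallel the proof of Theorem \ref{f-trans-lin}, using Theorem \ref{rel-ade} (which applies since $\mathrm{rank}\,\Phi \geq 3$) as the source of defining relations for $\st(\Phi; K, \mathfrak a)$. Write $N$ for the image of $\st(\Phi; \mathfrak a) \to \st(\Phi; K, \mathfrak a)$, equivalently the subgroup generated by the $x_\alpha(a) = z_\alpha(a,0)$. First I would show $N$ is normal by verifying $\up{z_\beta(c,r)}{x_\alpha(a)} \in N$ for every pair of roots: when $\alpha \perp \beta$ this is \fref{dis}; when $\alpha = \pm\beta$ it follows from \fref{add1} together with \fref{rel4} or the defining $\up{x_\alpha(d(c))}{g}$-relation; and when $\alpha, \beta$ span an $\mathsf A_2$-subsystem, specializations of \fref{conj2} (with one inner parameter set to zero, as in the displayed identities at the start of the proof of Theorem \ref{f-trans-lin}) exhibit the conjugate as a product of $x_\gamma(\cdot)$'s.

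Having normality, set $Q = \st(\Phi; K, \mathfrak a)/N$ and write it additively (the goal being to show $Q$ is abelian). By Theorem \ref{rel-ade}, $Q$ is generated by the $\overline z_\alpha(a,p)$ subject to the images of \fref{add1}, \fref{dis}, \fref{conj2}, \fref{hw}, \fref{rel4} together with $\overline z_\alpha(a,0) = 0$. Relation \fref{add1} gives FT1 and \fref{rel4} gives FT4. Identity \fref{mult} (derivable from \fref{conj2} via Lemma \ref{abs-rel}) shows $z_{\alpha+\beta}(ab, p) \in N$ whenever $\alpha+\beta \in \Phi$; since every root of a simply-laced system of rank $\geq 2$ is a sum of two roots and $\mathfrak a$ is commutative, this yields FT3 for every root $\gamma$. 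Projecting \fref{conj2} to $Q$ and using FT1, FT3 to kill the quadratic-in-$c$ contributions produces commutativity $[\overline z_\alpha(c,r), \overline z_\gamma(\cdot,\cdot)] = 0$ whenever $\alpha, \gamma$ share a rank-two subsystem; combined with \fref{dis} this covers all pairs with $\gamma \neq -\alpha$.

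The main obstacle is extracting FT2, FT5, and antipodal commutativity from \fref{hw}, whose image in $Q$ reads, for $\alpha+\beta \in \Phi$ and $\eps = N_{\alpha\beta}$,
\[
\overline z_{\alpha+\beta}(a, r - \eps pq) + \overline z_\beta(\eps qa, p) = \overline z_\alpha(-\eps ap, q) + \overline z_{\alpha+\beta}(a, r).
\]
Setting $r = \eps pq$ collapses the first summand and yields FT5 (up to signs absorbed by FT1); substituting FT5 back into the general identity yields FT2. For antipodal commutativity, given $a \in \mathfrak a$, $P \in K$ and a root $\alpha$, decompose $\alpha = \alpha' + \beta'$ in $\Phi$ (possible since $\mathrm{rank}\,\Phi \geq 2$) and apply FT5 with $p = P$, $q = 1$ to rewrite $\overline z_\alpha(a,P)$ as $\overline z_{\alpha'}(aP,1) + \overline z_{\beta'}(a,P)$; since $\alpha', \beta' \neq \alpha$, each summand commutes with $\overline z_{-\alpha}(b,s)$ by the non-antipodal case. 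The converse direction — that FT1--FT5 imply the images of \fref{add1}, \fref{dis}, \fref{conj2}, \fref{hw}, \fref{rel4} in any abelian group presented by these relations — is a direct substitution following the linear-case template, completing the presentation.
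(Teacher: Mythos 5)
Your overall strategy --- rerunning the linear argument of theorem \ref{f-trans-lin} directly on the presentation supplied by theorem \ref{rel-ade} --- is a legitimate alternative to the paper's proof, which is a two-line reduction: it invokes lemma \ref{colimit} to write \(\st(\Phi; K, \mathfrak a)\) as a colimit over \(\mathsf A_1\)- and \(\mathsf A_3\)-subsystems, applies theorem \ref{f-trans-lin} to each \(\mathsf A_3\)-piece, and only adds that the structure constants in \fref{ft5} cancel by \fref{ft1} and \fref{ft2}. Most of your steps (FT1--FT5, the non-antipodal and antipodal commutativity, the converse verification) do go through along the linear template.

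However, the normality step has a genuine gap: you dispose of the case \(\beta = \pm\alpha\), i.e.\ of \(\up{z_{\pm\alpha}(c,r)}{x_\alpha(a)}\), by appealing to \fref{add1} together with \fref{rel4} or \fref{rel3}. Those relations do not suffice. \fref{rel4} only shows that \(z_\alpha(a, d(b)) = x_{-\alpha}(b)\, x_\alpha(a)\, x_{-\alpha}(-b)\) lies in \(N\); it says nothing about \(z_{\pm\alpha}(c,r)\, x_\alpha(a)\, z_{\pm\alpha}(c,r)^{-1}\) for general \(r \in K\), and \fref{add1} only concatenates \(z\)'s with equal second arguments, so any attempt to unwind the conjugation stays inside conjugates of \(x_\alpha(a)\) by words in \(x_{\pm\alpha}(K)\), which are generic generators \(z_\alpha(a,p)\notin N\). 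This case cannot be settled inside the rank-one subsystem \(\{\pm\alpha\}\); it is exactly the case for which the proof of theorem \ref{f-trans-lin} needs the two displayed specializations of \fref{conj2} with a \emph{nonzero} inner parameter, combined with \fref{mult} to see that the resulting \(z_\gamma\)-factor with first argument in \(\mathfrak a^2\) lies in \(N\). (You cite those displayed identities for the \(\mathsf A_2\)-configuration, but that easier configuration is already covered by \fref{conj1}, i.e.\ by \fref{conj2} with both inner parameters zero; the displayed identities exist precisely to handle \(\beta = \pm\alpha\).) The repair is to adapt that argument: embed \(\alpha\) in an \(\mathsf A_2\)-subsystem \(\{\alpha,\gamma,\gamma-\alpha,\dots\}\), write \(x_\alpha(b)\,x_{\gamma}(-b)\) as the conjugated pair (the commutative analogue of taking \(q = 1\) in \(x_{ij}(-aq)\,x_{ik}(a)\)), apply the simply-laced \fref{conj2} and \fref{mult}, and peel off \(\up{z_\alpha(c,r)}{x_{\gamma}(-b)}\in N\) using the already-proved \(\mathsf A_2\)-case.
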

\begin{proof}
This follows from lemma \ref{colimit} and theorem \ref{f-trans-lin}. The structure constants in all summands in \fref{ft5} are the same, so they may be omitted by \fref{ft1} and \fref{ft2}.
\end{proof}

In conclusion let us list several further problems.

\begin{itemize}
\item Give an explicit presentation of relative unitary Steinberg groups and relative Steinberg groups of type \(\mathsf F_4\). 
\item Give an explicit presentation of birelative Steinberg groups such as \(\st(\Phi; K, \mathfrak a) \otimes \st(\Phi; K, \mathfrak b)\). Here \(\otimes\) means the non-abelian tensor product of crossed modules over \(\st(\Phi, K)\). If \(\Phi\) is simple laced of rank at least \(2\), then by \cite{BirelChev} the image of this tensor product in the simply connected Chevalley group \(G(\Phi, R)\) (i.e. the commutator \([\elin(\Phi; K, \mathfrak a), \elin(\Phi; K, \mathfrak b)]\)) is generated by the image of \(\st(\Phi; K, \mathfrak a \otimes \mathfrak b)\) and the elementary commutators \(y_\alpha(a, b) = [x_\alpha(a), x_{-\alpha}(b)]\) for \(a \in \mathfrak a\) and \(b \in \mathfrak b\) (they are the images of \(x_\alpha(a) \otimes x_{-\alpha}(b)\)). Moreover, there are nice relations between \(y_\alpha(a, b)\) modulo \(\elin(\Phi; K, \mathfrak a \mathfrak b)\). See also \cite{Multirel, BirelLin, BirelUnit} for another such results for elementary groups.
\item Calculate explicitly the relative linear Steinberg group \(\st(R, A)\) and the kernel \(\klin_2(R, A) = \mathrm{Ker}(\st(R, A) \to \glin(A))\) if \(A\) is contained in the Jacobson radical of \(A \rtimes R\), and similarly for simply laced Steinberg groups. Such a description exists for the relative \(\klin_2\)-functor over a commutative ring (\cite{Maazen}) and modulo some subgroup for the relative matrix linear Steinberg group over any ring (\cite{Hochschild}) in the case \(A^2 = 0\).
\item Find some natural factor-group of \(\st(A)\) with a structure of a crossed module over \(\st(R, A)\), and similarly for simply laced Steinberg groups. By theorems \ref{f-trans-lin} and \ref{f-trans-chev} the image of the unrelativized Steinberg group is normal in the relative Steinberg group, so at least some such factor-group exists (i.e. the image itself), although we do not know its explicit presentation. The unrelativized Steinberg group cannot be itself a crossed module in general: consider the field \(\mathbb F_2\) with a crossed module \(0 \colon V \to \mathbb F_2\) over it for some finite dimensional vector space \(V_{\mathbb F_2}\). The group \(\st(\mathsf A_\ell; \mathbb F_2, V)\) is finite for \(\ell \geq 2\) (for example, by \cite{Maazen}), hence every finitely generated crossed module over it is of polynomial growth. But the unrelativized group \(\st(\mathsf A_\ell, V)\) (with zero product on \(V\)) is the \(\frac{\ell (\ell + 1)}2\)-th direct power of the free product \(V * V\), so it has exponential growth if \(\dim(V) > 1\).
\end{itemize}

\bibliographystyle{plain}  
\bibliography{references}

\begin{thebibliography}{10}

\bibitem{Hochschild}
Dennis~R. K. and Igusa K.
\newblock Hochschild homology and the second obstruction for pseudoisotopy.
\newblock In {\em Algebraic K-Theory}, volume 966 of {\em Lecture Notes in
  Mathematics}, pages 7--58, 1982.

\bibitem{Keune}
F.~Keune.
\newblock The relativization of $\mathrm{K}_2$.
\newblock {\em J. Algebra}, 54:159.

\bibitem{Loday}
J.-L. Loday.
\newblock Cohomologie et groupe de {S}teinberg relatifs.
\newblock {\em J. Algebra}, 54:178.

\bibitem{Maazen}
H.~Maazen and J.~Stienstra.
\newblock A presentation for $\mathrm{K}_2$ of split radical pairs.
\newblock {\em J. Pure Appl. Alg.}, 10:271.

\bibitem{CentralityE}
S.~Sinchuk.
\newblock On centrality of ${K}_2$ for {C}hevalley groups of type ${E}_l$.
\newblock {\em J. Pure Appl. Algebra}, 220(2):857--875, 2016.

\bibitem{Tulenbaev}
M.~S. Tulenbaev.
\newblock The steinberg group of a polynomial ring.
\newblock {\em Math. USSR Sbornik}, 45(1):131--144, 1983.

\bibitem{ChevElem}
N.~Vavilov and E.~Plotkin.
\newblock Chevalley groups over commutative rings: {I}. {E}lementary
  calculations.
\newblock {\em Acta Appl. Math.}, 45:73.

\bibitem{Multirel}
N.~Vavilov and Z.~Zhang.
\newblock Multiple commutators of elementary subgroups: end of the line.
\newblock Preprint, arXiv:1910.14444, 2019.

\bibitem{BirelLin}
N.~Vavilov and Z.~Zhang.
\newblock Relative and unrelative elementary groups, revisited.
\newblock Preprint, arXiv:1910.08984, 2019.

\bibitem{BirelChev}
N.~Vavilov and Z.~Zhang.
\newblock Commutators of relative and unrelative elementary subgroups in
  {C}hevalley groups.
\newblock Preprint, arXiv:2003.07230, 2020.

\bibitem{BirelUnit}
N.~Vavilov and Z.~Zhang.
\newblock Commutators of relative and unrelative elementary unitary groups.
\newblock Preprint, arXiv:2004.00576, 2020.

\bibitem{LinK2}
E.~Voronetsky.
\newblock Centrality of ${K}_2$-functor revisited.
\newblock {\em J. Pure Appl. Alg.}, 225(4), 2021.

\end{thebibliography}

\end{document}